\newtheorem{theorem}{Theorem}[section]
\newtheorem{definition}[theorem]{Definition}
\newtheorem{lemma}[theorem]{Lemma}
\newtheorem{corollary}[theorem]{Corollary}
\newtheorem{proposition}[theorem]{Proposition}
\newtheorem{example}[theorem]{Example}
\newenvironment{proof}{{\bf Proof.  }}{$\square$}
\newcommand{\Rmnum}[1]{\expandafter\@slowromancap\romannumeral #1@}
\begin{document}
\title{Gr\"{o}bner-Shirshov bases for $L$-algebras\footnote{Supported by the
NNSF of China (Nos.10771077, 10911120389) and the NSF of Guangdong
Province (No.06025062).}}
\author{
L. A. Bokut\footnote {Supported by RFBR 01-09-00157, LSS--344.2008.1
and SB RAS Integration grant No. 2009.97 (Russia).} \\
{\small \ School of Mathematical Sciences, South China Normal
University}\\
{\small Guangzhou 510631, P. R. China}\\
{\small Sobolev Institute of Mathematics, Russian Academy of
Sciences}\\
{\small Siberian Branch, Novosibirsk 630090, Russia}\\
{\small  bokut@math.nsc.ru}\\
\\
 Yuqun
Chen\footnote {Corresponding author.} \ and   Jiapeng Huang \\
{\small \ School of Mathematical Sciences, South China Normal
University}\\
{\small Guangzhou 510631, P. R. China}\\
{\small yqchen@scnu.edu.cn}\\
{\small jphuang1985@163.com}}

\date{}

\maketitle \noindent\textbf{Abstract:}    In this paper, we firstly
establish Composition-Diamond lemma for $\Omega$-algebras. We give a
Gr\"{o}bner-Shirshov basis of the free $L$-algebra as a quotient
algebra of a free $\Omega$-algebra, and then the normal form of the
free $L$-algebra is obtained. We secondly establish
Composition-Diamond lemma for $L$-algebras. As applications, we give
 Gr\"{o}bner-Shirshov bases of the free dialgebra and the free product
 of two $L$-algebras, and then we show four embedding theorems of
$L$-algebras: 1) Every countably generated $L$-algebra  can be
embedded into a two-generated $L$-algebra. 2) Every $L$-algebra  can
be embedded into a simple $L$-algebra. 3) Every countably generated
$L$-algebra over a countable field can be embedded into a simple
two-generated $L$-algebra. 4) Three arbitrary $L$-algebras $A$, $B$,
$C$ over a field $k$ can be embedded into a simple $L$-algebra
generated by $B$ and $C$ if $|k|\leq \dim(B*C)$ and $|A|\leq|B*C|$,
where $B*C$ is the free product of $B$ and $C$.

\ \

\noindent \textbf{Key words:}  Gr\"{o}bner-Shirshov basis;
$\Omega$-algebra; dialgebra; $L$-algebra.

\noindent \textbf{AMS 2000 Subject Classification}: 16S15, 13P10,
16W99, 17D99
 \section{Introduction}

The theories of Gr\"obner-Shirshov bases and Gr\"obner bases were
invented independently by A.I. Shirshov  (\cite{Sh}, 1962) for
non-commutative and non-associative algebras, and by H. Hironaka
(\cite{H}, 1964) and B. Buchberger (\cite{bu65}, 1965) for
commutative algebras. Gr\"obner-Shirshov technique is proved to be
very useful in the study of presentations of many kinds of algebras
by generators and defining relations.

V. Drensky and R. Holtkamp \cite{Dren} constructed Gr\"obner bases
theory for algebras with multiple operations ($\Omega$-algebras,
where $\Omega$ consists of $n$-ary operations, $n\geq2$) and proved
the Diamond lemma for $\Omega$-algebras. For associative
$\Omega$-algebras, Gr\"obner-Shirshov bases and Composition-Diamond
lemma were established in \cite{Qiu}.

The class of $L$-algebras was invented by P. Leroux \cite{Ph}. In
\cite{Ph1}, P. Leroux gave a coalgebraic framework of directed
graphs equipped with weights in terms of $L$-coalgebras, which are
$k$-spaces with two co-operations $\Delta_M,~\tilde{\Delta}_M$
verifying the entanglement relation:
$$
(\tilde{\Delta}_M\otimes id)\Delta_M = (id \otimes
\Delta_M)\tilde{\Delta}_M.
$$
$L$-algebras arise from this coding weighted directed graphs
\cite{Ph}. They are algebras over a field $k$ with two operations
$\prec$, $\succ$ satisfied one identity:
$$
(x \succ y)\prec z = x \succ ( y \prec z).
$$
In fact, many types of algebras are $L$-algebras. For example,
associative algebras. In this case, the two operations coincide with
the associative product.

P. Leroux found a normal form of  a free $L$-algebra in \cite{Ph}.

In this paper, we establish Composition-Diamond lemma for
$\Omega$-algebras, where $\Omega$ consists of $n$-ary operations,
$n\geq1$. This generalizes the result in V. Drensky and R. Holtkamp
\cite{Dren}. As a result,  we give another linear basis for the free
$L$-algebra by using Composition-Diamond lemma for
$\Omega$-algebras. Then we continue to study $L$-algebras and
establish the Composition-Diamond lemma for $L$-algebras. As
applications, we prove embedding theorems for $L$-algebras:

1) Every countably generated $L$-algebra over a field $k$ can be
embedded into a two-generated $L$-algebra.

2) Every $L$-algebra over a field $k$ can be embedded into a simple
$L$-algebra.

3) Every countably generated $L$-algebra over a countable field $k$
can be embedded into a simple two-generated $L$-algebra.

4) Three arbitrary $L$-algebras $A$, $B$, $C$ over a field $k$ can
be embedded into a simple $L$-algebra generated by $B$ and $C$ if
$|k|\leq \dim(B*C)$ and $|A|\leq|B*C|$, where $B*C$ is the free
product of $B$ and $C$.

We also give the Gr\"{o}bner-Shirshov bases of a free dialgebra and
the free product of two $L$-algebras, respectively, and then the
normal forms of such algebras are obtained.

\section{Composition-Diamond lemma for
$\Omega$-algebras}

In this section, we establish the Composition-Diamond lemma for
$\Omega$-algebras.

Let $k$ be a field, $X$  a set of variables,  $\Omega$  a set of
multilinear operations, and
$$
\Omega=\cup_{n\geq1}\Omega_n,
$$
where $\Omega_n=\{\delta_{i}^{(n)}|i\in I_n\}$ is the set of $n$-ary
operations, $n=1,2,\dots$. Now, we define ``$\Omega$-words".
(Usually they are called terms in $X$ and $\Omega$, see, for example
\cite{mal}.)

Define
$$
(X,\Omega)_0 = X.
$$

For $m \geqslant 1$, define
$$
(X,\Omega)_m = X \cup \Omega((X,\Omega)_{m-1})
$$
where
$$
\Omega((X,\Omega)_{m-1}) = \cup_{t=1}^\infty \{
\delta_{i}^{(t)}(u_1, u_2, \dots, u_t) |~\delta_{i}^{(t)} \in
\Omega_t, u_j \in\ (X,\Omega)_{m-1}\}.
$$

Let
$$
(X,\Omega)=\bigcup_{m=0}^\infty (X,\Omega)_m.
$$
Then each element in $(X,\Omega)$ is called an $\Omega$-word.

Let $k(X,\Omega)$ be a $k$-linear space with $k$-basis $(
X,\Omega)$. For any $\delta_{i}^{(t)} \in \Omega_t$, extend linearly
$\delta_{i}^{(t)}$ to $k(X,\Omega)$. Then $k(X,\Omega)$ is a free
$\Omega$-algebra generated by $X$. We call the elements of
$k(X,\Omega)$ as $\Omega$-polynomials.

Let $k(X,\Omega)$ be the free $\Omega$-algebra defined as above and
$\star \not\in X$. By a $\star$-$\Omega$-word we mean any expression
in $(X \cup \{\star\},\Omega)$ with only one occurrence of $\star$.
Let $u$ be a $\star$-$\Omega$-word and $s \in k( X,\Omega)$. Then we
call $u|_s = u|_{\star\mapsto s}$ an $s$-$\Omega$-word.

Similar to $\star$-$\Omega$-word, we define $(\star_1,\star_2
)$-$\Omega$-words as expressions in $(X\cup
\{\star_1,\star_2\},\Omega)$ with only one occurrence of $\star_1$
and only one occurrence of $\star_2$.  Let $u$ be a
$(\star_1,\star_2)$-$\Omega$-word and $s_1,~s_2 \in  k(X,\Omega)$.
We call
$$
u|_{s_1,s_2} = u|_{\star_1\mapsto s_1,\star_2\mapsto s_2}
$$
 an $s_1$-$s_2$-$\Omega$-word.

Let $>$ be a well ordering on $(X,\Omega)$. Then $>$ is called
monomial if for any $\star$-$\Omega$-word $w$ and any $ u, v\in
(X,\Omega)$, $u>v$ implies $w|_u>w|_v$. The following example shows
that such a monomial ordering on $(X,\Omega)$ exists.

\begin{example}
Suppose that $X$ and $\Omega$ are well-ordered sets. For any $u\in
(X,\Omega)$, if $u=x\in X$, define $wt(u)=(1,x)$; if
$u=\delta^{(i)}(u_1,u_2,\dots,u_i)$, define
$wt(u)=(|u|_{\Omega}+|u|_X, |u|_X,\delta^{(i)},u_1,u_2,\dots,u_i)$,
where $|u|_T$ means the number of $t\in T$ in $u$. For any $u,v\in
(X,\Omega)$, we define
$$
u>v\Longleftrightarrow wt(u)>wt(v) ~~~~~~~lexicographically
$$
by induction on $|u|_{\Omega}+|u|_X+|v|_{\Omega}+|v|_X$. Then it is
easy to see that $>$ is a monomial ordering on $(X,\Omega)$.
\end{example}

\noindent \textbf{Remark}: \ In V. Drensky and R. Holtkamp
\cite{Dren}, they use the ordering (\ref{1}) (see next section).
However, the ordering (\ref{1}) is not well-ordered on $(X,\Omega)$.
For example, let $\delta^{(1)},\zeta^{(1)}\in \Omega_1$ with
$\delta^{(1)}>\zeta^{(1)}$ and $x\in X$. Then for the ordering
(\ref{1}), we have an infinite descending chain
$$
\delta^{(1)}(x)>\zeta^{(1)}(\delta^{(1)}(x))>\zeta^{(1)}(\zeta^{(1)}(\delta^{(1)}(x)))>\cdots.
$$

\ \

From now on, in this section, we assume that $(X,\Omega)$ is
equipped with a monomial ordering $>$.

For any $\Omega$-polynomial $0\neq f\in k(X,\Omega)$, let $\bar{f}$
be the leading $\Omega$-word of $f$. If the coefficient of $\bar{f}$
is $1$, then $f$ is called monic.

\begin{definition}
Let $f, g$ be two monic $\Omega$-polynomials. If there exists an
$\Omega$-word $w = \bar{f} =
 u|_{\bar{g}}$ for some $\star$-$\Omega$-word $u$, then we call
 $(f,g)_w = f- u|_g$ the inclusion composition of $f$ and $g$
 with respect to $w$. If this is the case, $w$ is called the
 ambiguity of the composition $(f,g)_w$.
 \end{definition}

 \begin{definition}
 Let $S$ be a set of monic $\Omega$-polynomials. Then the composition
 $(f,g)_w $ is called trivial modulo $(S, w)$, denoted by $(f,g)_w \equiv 0~mod(S,
 w)$, if
 $$
 (f,g)_\omega = \Sigma
 \alpha_iu_i|_{s_i},
 $$
where each $\alpha_i \in k,\ u_i\ \star$-$\Omega$-word, $s_i \in S$
 and $u_i|_{\bar{s_i}}<w$.

 $S$ is called a Gr\"{o}bner-Shirshov basis in
 $k(X,\Omega)$ if for any $f, g \in
 S$,  $(f,g)_w \equiv 0~mod(S,
 w)$.
 \end{definition}

A subset $I$ of $k(X,\Omega)$ is called an $\Omega$-ideal of
$k(X,\Omega)$ if $I$ is a subspace such that for any
$\star$-$\Omega$-word $u$,
$$
u|_{_I}=\{u|_{_f}|f\in I\}\subseteq I.
$$

 \begin{theorem}\label{CDL}
 (Composition-Diamond lemma for $\Omega$-algebras)\
 Let $S$ be a set of monic $\Omega$-polynomials in $k(X,\Omega)$,
 $>$ a monomial ordering on $(X,\Omega)$ and $Id(S)$
the $\Omega$-ideal of $k(X,\Omega)$ generated by $S$. Then the
following statements are equivalent:
\begin{enumerate}
\item[ \Rmnum{1} )] $S $ is a Gr\"{o}bner-Shirshov basis in $k(X,\Omega)$.
\item[ \Rmnum{2} )] $f\in Id (S)\Rightarrow \bar{f} = u|_{\bar{s}}$
where $u|_s$ is an $s$-$\Omega$-word, $s \in S$.
\item[ \Rmnum{3})] $Irr (S) = \{ \omega \in (X,\Omega)|  \omega \neq
u|_{\bar{s}}$, $u|_s$ is an $s$-$\Omega$-word, $s \in S\}$ is a
$k$-basis of the algebra $k( X,\Omega|S) = k( X,\Omega)/Id(S)$.
\end{enumerate}
\end{theorem}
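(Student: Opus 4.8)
The plan is to prove the cycle of implications (\Rmnum{1}) $\Rightarrow$ (\Rmnum{2}) $\Rightarrow$ (\Rmnum{3}) $\Rightarrow$ (\Rmnum{1}), following the standard Composition--Diamond strategy adapted to the tree-like structure of $\Omega$-words. The technical backbone is a \emph{reduction} procedure: given $s\in S$ and an $s$-$\Omega$-word $u|_s$, the ``elementary reduction'' replaces $u|_{\bar s}$ inside an $\Omega$-word by $u|_{\bar s - s}$, which by monomiality of $>$ strictly lowers the leading $\Omega$-word. Since $>$ is a well ordering on $(X,\Omega)$ and every $\Omega$-polynomial is a finite $k$-combination of $\Omega$-words, any sequence of elementary reductions terminates; hence every $f\in k(X,\Omega)$ can be rewritten, modulo $Id(S)$, as a $k$-combination of elements of $Irr(S)$. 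This immediately gives that $Irr(S)$ spans $k(X,\Omega|S)$, a fact used in (\Rmnum{1}) $\Rightarrow$ (\Rmnum{3}) and (\Rmnum{2}) $\Rightarrow$ (\Rmnum{3}).

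For (\Rmnum{1}) $\Rightarrow$ (\Rmnum{2}), I would first record the key normal-form lemma: if $S$ is a Gr\"obner--Shirshov basis, then for any $s_1,s_2\in S$, any $\star_1$-$\Omega$-word $u_1$, any $\star_2$-$\Omega$-word $u_2$, and any $\Omega$-word $w$ with $\overline{u_1|_{s_1}}=w=\overline{u_2|_{s_2}}$, one has $u_1|_{s_1}-u_2|_{s_2}\equiv 0\ mod(S,w)$; that is, ``two $s$-$\Omega$-words with the same leading word are congruent via lower terms.'' This is proved by a case analysis on how the two occurrences of leading words sit inside $w$: either they are in disjoint subtrees (the trivial/disjoint case, handled directly by substituting one reduction into the other), or one is contained in the other, which is exactly the inclusion composition covered by Definition~2.3 and the Gr\"obner--Shirshov hypothesis. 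Note that, unlike in the associative word case, there is no ``overlap'' ambiguity here --- two occurrences in a tree are either nested or disjoint --- which simplifies the case analysis. Granting this lemma, an arbitrary $0\neq f\in Id(S)$ is written as $f=\sum_i\alpha_i u_i|_{s_i}$; letting $w^\ast=\max_i \overline{u_i|_{s_i}}$ and inducting on $w^\ast$ (legitimate since $>$ is a well ordering) and on the number of maximal terms, one uses the lemma to lower $w^\ast$ whenever $\overline f<w^\ast$, forcing eventually $\overline f=\overline{u_i|_{s_i}}=u_i|_{\bar{s_i}}$ for some $i$.

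The implications (\Rmnum{2}) $\Rightarrow$ (\Rmnum{3}) and (\Rmnum{3}) $\Rightarrow$ (\Rmnum{1}) are then formal. For (\Rmnum{2}) $\Rightarrow$ (\Rmnum{3}): spanning is the reduction argument above; for linear independence, a nontrivial linear dependence among elements of $Irr(S)$ would be a nonzero element of $Id(S)$ whose leading $\Omega$-word lies in $Irr(S)$, contradicting (\Rmnum{2}). For (\Rmnum{3}) $\Rightarrow$ (\Rmnum{1}): given $f,g\in S$ with ambiguity $w$, the composition $(f,g)_w$ lies in $Id(S)$, so reduces to a $k$-combination of $Irr(S)$-elements; but $(f,g)_w$ has leading word $<w$ already, and chasing the reduction (each step being subtraction of a $u_i|_{s_i}$ with $u_i|_{\bar{s_i}}<w$) exhibits $(f,g)_w=\sum\alpha_i u_i|_{s_i}$ with all $u_i|_{\bar{s_i}}<w$, i.e. $(f,g)_w\equiv 0\ mod(S,w)$. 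The main obstacle I anticipate is the bookkeeping in the normal-form lemma: making precise, for $\Omega$-words viewed as labelled trees, the statement that two distinct occurrences of leading subwords are either disjoint or nested, and verifying that the disjoint case genuinely reduces with lower leading term --- this requires careful use of monomiality applied to $\star$-$\Omega$-words with the substitution point chosen at each of the two occurrences in turn.
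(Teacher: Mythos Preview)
Your proposal is correct and follows essentially the same route as the paper's proof: the same cycle (\Rmnum{1}) $\Rightarrow$ (\Rmnum{2}) $\Rightarrow$ (\Rmnum{3}) $\Rightarrow$ (\Rmnum{1}), the same key observation that in the tree setting two occurrences of leading words are either disjoint or nested (no associative-style overlap), and the same disjoint/inclusion case split handled via a $(\star_1,\star_2)$-$\Omega$-word and the inclusion-composition hypothesis respectively. The only cosmetic difference is that you isolate the ``normal-form lemma'' $u_1|_{s_1}\equiv u_2|_{s_2}\ mod(S,w)$ as a separate statement, whereas the paper proves it inline inside (\Rmnum{1}) $\Rightarrow$ (\Rmnum{2}); and for (\Rmnum{3}) $\Rightarrow$ (\Rmnum{1}) the paper simply invokes the representation $f=\sum_{Irr(S)}\alpha_iu_i+\sum\beta_ju_j|_{s_j}$ with $u_j|_{\overline{s_j}}\le\overline f$ (obtained during (\Rmnum{2}) $\Rightarrow$ (\Rmnum{3})) rather than re-running a reduction, but this amounts to the same thing.
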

\begin{proof}
$\Rmnum{1})\Rightarrow \Rmnum{2})$. \ Let $S$ be a
Gr\"{o}bner-Shirshov basis in $k(X,\Omega)$ and $0\neq f\in Id(S)$.
We can assume that
$$
f=\sum_{i=1}^n\alpha_iu_i|_{s_i},
$$
where each $\alpha_i\in k,  \ s_i\in S$ and $u_i|_{s_i}$
$s_i$-$\Omega$-word. Let
$$
w_i=u_i|_{\overline{s_i}}, \
w_1=w_2=\cdots=w_{_l}>w_{_{l+1}}\geq\cdots
$$

We prove the result by induction on $l$ and $w_1$.

If $l=1$, then
$\overline{f}=\overline{u_1|_{s_1}}=u_1|_{\overline{s_1}}$ and the
result holds. Assume that $l\geq 2$. Then
$$
\alpha_1u_1|_{s_1}+\alpha_2u_2|_{s_2}=(\alpha_1+\alpha_2)u_1|_{s_1}+\alpha_2(u_2|_{s_2}-u_1|_{s_1}).
$$

There are two cases to consider:

1) $\overline{s_1}$ and $\overline{s_2}$ are disjoint in $w_1$. Then
there exists a $(\star_1,\star_2)$-$\Omega$-word $\Pi$ such that
$$
\Pi|_{\overline{s_1},\overline{s_2}}=u_1|_{\overline{s_1}}=u_2|_{\overline{s_2}}
$$
and
$$
u_1|_{s_1}-u_2|_{s_2}=\Pi|_{s_1,\overline{s_2}}-\Pi|_{\overline{s_1},s_2}
=-\Pi|_{s_1,s_2-\overline{s_2}}+\Pi|_{s_1-\overline{s_1},s_2}.
$$

Since $ \overline{s_2-\overline{s_2}}<\overline{s_2}$ and
$\overline{s_1-\overline{s_1}}<\overline{s_1}$, we have
$\overline{\Pi|_{s_1,s_2-\overline{s_2}}}<w_1$ and $
\overline{\Pi|_{s_1-\overline{s_1},s_2}}<w_1.$ Noting that
$\Pi|_{s_1,s_2-\overline{s_2}}$ is linear combination of
$s_1$-$\Omega$-words and $\Pi|_{s_1-\overline{s_1},s_2}$ is linear
combination of $s_2$-$\Omega$-words, we have
$$
u_1|_{s_1}\equiv u_2|_{s_2}~mod(S,w_1).
$$

Thus, if $\alpha_1+\alpha_2\neq 0$ or $l>2$, then the result follows
from the induction on $l$. For the case $\alpha_1+\alpha_2= 0$ and
$l=2$, we use the induction on $w_1$. The result follows.

2) One of $\overline{s_1}$, $\overline{s_2}$ is contained in the
other. We may assume that $\overline{s_2}$ is contained in
$\overline{s_1}$, i.e., $\overline{s_1}=u|_{\overline{s_2}}$ for
some $s_2$-$\Omega$-word $u|_{s_2}$. Thus,
$$
u_1|_{s_1}-u_2|_{s_2}=u_1|_{s_1}-u_1|_{u|_{s_2}}=u_1|_{s_1-u|_{s_2}}.
$$
Since S is a Gr\"{o}bner-Shirshov basis in $k(X,\Omega)$, we have
$$
s_1-u|_{s_2}=\sum_t \alpha_tv_t|_{s_t}
$$
where each $\alpha_t\in k$, $s_t\in S$, $v_t|_{s_t}$
$s_t$-$\Omega$-word, and $v_t|_{\overline{s_t}}<\overline{s_1}$. So
$u_1|_{v_t|_{\overline{s_t}}}<w_1$ for any $t$. Now, the result
follows.

$\Rmnum{2})\Rightarrow \Rmnum{3})$. For any $f\in k( X,\Omega)$, we
may express $f$ as
$$
f=\sum_{u_i\in Irr(S),\ u_i\leqslant
\overline{f}}\alpha_iu_i+\sum_{s_j\in S,\
u_j|_{\overline{s_j}}\leqslant \overline{f}}\beta_ju_j|_{s_j},
$$
where $\alpha_i,\beta_j\in k$ and $u_j|_{s_j}$ $s_j$-$\Omega$-word.
So any $f\in k(X,\Omega)$ can be expressed modulo $Id(S)$ as a
linear combination of elements from $Irr(S)$. That is, $Irr(S)$
spans $k(X,\Omega|S)$ as $k$-space.

Suppose $g=\alpha_1u_1+\alpha_2u_2+\cdots+\alpha_nu_n=0$ in $k(
X,\Omega|S)$, where $u_i\in Irr(S)$,  $\alpha_i\neq 0$,
$i=1,2,\dots,n$ and $u_1>u_2>\cdots>u_n$. Then in $k( X,\Omega)$,
$g\in Id(S)$. By $\Rmnum{2})$,
$\overline{g}=u_1=u|_{\overline{s}}\notin Irr(S)$, a contradiction.

$\Rmnum{3})\Rightarrow \Rmnum{1})$. For any composition $(f,g)_w$ in
$S$, by $\Rmnum{3})$,
$$
(f,g)_w=\sum_{s_j\in S,\ u_j|_{\overline{s_j}}\leqslant
\overline{(f,g)_w}}\beta_ju_j|_{s_j}.
$$
Since $\overline{(f,g)_w}<w$, $(f,g)_w\equiv 0\ \ mod(S,w)$.
 \ \hfill\end{proof}

\ \

\noindent \textbf{Remark}: \ When $\Omega$ only contains $n$-ary
multilinear operations for $n\geqslant 2$, the situation is just the
same as that established by V. Drensky and R. Holtkamp in
\cite{Dren} .

\ \

\section{Gr\"{o}bner-Shirshov bases for free $L$-algebras}

In this section, by using the Composition-Diamond lemma for
$\Omega$-algebras (Theorem \ref{CDL}), we give a
Gr\"{o}bner-Shirshov basis of the free $L$-algebra and then a
$k$-basis of such an algebra is obtained.

\begin{definition}\cite{Ph}\label{lalg}
An L-algebra is a $k$-space $L$ equipped with two binary $k$-linear
operations $\prec,~\succ : L{\otimes}L \rightarrow L$ verifying the
so-called entanglement relation:
\begin{equation}\label{0}
(x \succ y)\prec z = x \succ ( y \prec z), \ \ \forall  x, y ,z \in
L
\end{equation}
\end{definition}
Thus, an $L$-algebra is an $\Omega$-algebra  where $\Omega=\{ \prec,
\succ \}$.

In the following, we always assume that $\Omega=\{ \prec, \succ \}$.

Let $k(X,\Omega)$ be the free $\Omega$-algebra generated by $X$. Let
$$
S={\{(x\succ y)\prec z-x\succ (y\prec z)|~x,\ y,\ z\in
(X,\Omega)}\}.
$$
Then $L(X)=k(X,\Omega|S)=k(X,\Omega)/Id(S)$ is clearly a free
$L$-algebra generated by $X$.

We will order the set $(X,\Omega)$.

Let $X$ be a well-ordered set. Denote $|u|_X$ by $|u|$, $\succ$ by
$\delta_1$, and $\prec$ by $\delta_2$. Let $\delta_1<\delta_2$. For
any $u \in (X,\Omega)$, if $u=x\in X$, denote by
$$
wt(u)=(1, x);
$$
if $u=\delta_i(u_1,u_2)$ for some $u_1,~u_2\in (X,\Omega)$, denote
by
$$
wt(u)=(|u|, \delta_i, u_1, u_2).
$$
For any $u,v\in (X,\Omega)$, define
\begin{equation}\label{1}
u>v\Longleftrightarrow wt(u)>wt(v) ~~~~~~~lexicographically
\end{equation} by
induction on $|u|+|v|$.

It is clear that $>$ is a monomial ordering on $(X,\Omega)$.

We will use the ordering (\ref{1}) on $(X,\Omega)$ in sequel.

\begin{theorem}\label{GSBl}
With the ordering (\ref{1}) on $(X,\Omega)$,
$$
S={\{(x\succ y)\prec z-x\succ (y\prec z)|~x,\ y,\ z\in (X,\Omega)}\}
$$
is a Gr\"{o}bner-Shirshov basis in $k(X,\Omega)$.
\end{theorem}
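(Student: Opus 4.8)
By the Composition–Diamond lemma for $\Omega$-algebras (Theorem~\ref{CDL}), it suffices to show that every inclusion composition $(f,g)_w$ with $f,g\in S$ is trivial modulo $(S,w)$. First I would pin down the leading word of a defining relation. For $s=(x\succ y)\prec z-x\succ(y\prec z)$, I claim $\bar s=(x\succ y)\prec z$: writing $u=(x\succ y)\prec z$ and $v=x\succ(y\prec z)$, both have $|u|=|v|=|x|+|y|+|z|$, so the lexicographic comparison of $wt(u)=(|u|,\delta_2,x\succ y,z)$ and $wt(v)=(|v|,\delta_1,x,y\prec z)$ is decided at the second coordinate by $\delta_1<\delta_2$, giving $u>v$. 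Thus every $s\in S$ is monic with $\bar s=(x\succ y)\prec z$ for the corresponding $x,y,z\in(X,\Omega)$.

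Next I would enumerate the ways two such leading words can overlap. Since $\bar g=(x'\succ y')\prec z'$ has its $\star$ in an inclusion $w=\bar f=u|_{\bar g}$, the subword $\bar g$ occurs inside $\bar f=(x\succ y)\prec z$, and the possibilities are: (a) $\bar g$ sits inside $x$, inside $y$, or inside $z$ (i.e. the outer $\star$-$\Omega$-word $u$ enters one of the three argument slots of $\bar f$); or (b) $\bar g=\bar f$ as a whole, which forces $x'\succ y'=x\succ y$ and $z'=z$, hence $f=g$ and the composition is $0$; or (c) a genuinely interleaved overlap, which I must check cannot happen because the top operation of $\bar f$ is $\prec$ applied to $(x\succ y)$ and $z$, and there is no ``room'' for $\bar g$'s pattern $(\,\cdot\succ\cdot\,)\prec\cdot$ to straddle the two arguments of $\bar f$ without being wholly contained in one of them. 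So only the contained case (a) is substantive.

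For case~(a), say $\bar g$ occurs inside the first argument, so that $x=p|_{\bar g}$ for some $\star$-$\Omega$-word $p$ and $w=(p|_{\bar g}\succ y)\prec z$. Then $u|_g=(p|_g\succ y)\prec z$ and the composition is
$$
(f,g)_w=f-u|_g=\bigl((p|_{\bar g}\succ y)\prec z-p|_{\bar g}\succ(y\prec z)\bigr)-\bigl((p|_g\succ y)\prec z-p|_g\succ(y\prec z)\bigr).
$$
I would rewrite this as $-\,(p|_{\bar g}\succ(y\prec z))+(p|_g\succ(y\prec z))=-\bigl(p|_{g}\succ(y\prec z)-p|_{g}\succ(y\prec z)\bigr)$ after adding and subtracting $(p|_g\succ y)\prec z$: more carefully, the two quartic top terms cancel once we observe $p|_{\bar g}=p|_g+(p|_g-p|_{\bar g})$ is irrelevant since $\bar g$ is the leading word of $g$, and what remains is a combination of $s'$-$\Omega$-words for suitable $s'\in S$ (namely the relation with first argument $x$ replaced appropriately), together with lower terms coming from $g-\bar g$ fed through the $\star$-$\Omega$-words $(\,\cdot\succ y)\prec z$ and $(\,\cdot)\succ(y\prec z)$; each such term has leading $\Omega$-word strictly below $w$ because the ordering is monomial and $\overline{g-\bar g}<\bar g$. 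The cases ``$\bar g$ inside $y$'' and ``$\bar g$ inside $z$'' are handled identically. Hence $(f,g)_w\equiv 0\ mod(S,w)$ in every case, and $S$ is a Gröbner–Shirshov basis.

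The main obstacle I anticipate is case~(c): rigorously ruling out interleaved (non-containment) overlaps between two copies of the pattern $(\,\cdot\succ\cdot\,)\prec\cdot$. This is really a combinatorial statement about $\star$-$\Omega$-words — that if $\bar f=u|_{\bar g}$ then the occurrence of $\bar g$ is either all of $\bar f$ or lies within a single argument of the outermost operation — and it must be argued from the tree structure of $\Omega$-words (an occurrence of a subterm is a subtree, and the subtree rooted at the $\prec$ of $\bar g$ either coincides with the root of $\bar f$ or is nested strictly below one of its two children). Once that is established, the composition computation above is routine bookkeeping with the monomial property of the ordering.
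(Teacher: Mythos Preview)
Your approach is the paper's: pin down $\bar s=(x\succ y)\prec z$, observe that the only inclusion ambiguities place $\bar g$ inside one of $x,y,z$, and check one case explicitly. Your subtree argument excluding ``interleaved'' overlaps (your case~(c)) is correct and is precisely the reason the paper can list only the three ambiguities i)--iii) without further comment; case~(b) is indeed the trivial self-composition.

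Your explicit check of case~(a), however, is miscomputed. You correctly set $u|_g=(p|_g\succ y)\prec z$, but in the next display you subtract $(p|_g\succ y)\prec z-p|_g\succ(y\prec z)$ rather than just $(p|_g\succ y)\prec z$; the extra term $-p|_g\succ(y\prec z)$ is not part of $u|_g$. Writing $g=(a\succ b)\prec c-a\succ(b\prec c)$, the correct expression is
\[
(f,g)_w \;=\; -\,p|_{\bar g}\succ(y\prec z)\;+\;\bigl(p|_{a\succ(b\prec c)}\succ y\bigr)\prec z,
\]
and now each summand is reduced by a single element of $S$: the first via $g$ inside the $\star$-word $p|_\star\succ(y\prec z)$ (whose value at $\bar g$ is $<w$ because its top operation is $\succ$), the second via the relation with first argument $p|_{a\succ(b\prec c)}$ (leading word $<w$ since $a\succ(b\prec c)<\bar g$ and the ordering is monomial). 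Both reduce to $\pm\,p|_{a\succ(b\prec c)}\succ(y\prec z)$ and cancel. This is exactly the paper's two-line computation; your informal paragraph after the display gestures toward it, but the displayed identity and the subsequent line ``$p|_{\bar g}=p|_g+(p|_g-p|_{\bar g})$'' are not right as written.
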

\begin{proof}
All the possible ambiguities of compositions of $\Omega$-polynomials
in $S$ are:
$$
i)~(x|_{(a\succ b)\prec c}\succ y)\prec z~~~~~ ii)~(x\succ
y|_{(a\succ b)\prec c})\prec z~~~~~ iii)~(x\succ y)\prec z|_{(a\succ
b)\prec c}~~~~~
$$
 where $a, b, c, x, y, z\in (X,\Omega)$. It is easy to check that all these compositions
are trivial. Here, for example, we just check $i)$. Others are
similarly proved. Let
$$
f(x,y,z)=(x\succ y)\prec z-x\succ (y\prec z).
$$
 Then
\begin{eqnarray*}
&&(f(x|_{(a\succ b)\prec c},y,z),f(a,b,c))_{(x|_{(a\succ b)\prec
c}\succ y)\prec z}\\
&=& -x|_{(a\succ b)\prec c}\succ (y\prec z)+(x|_{a\succ (b\prec
c)}\succ y)\prec z\\
&\equiv& -x|_{a\succ (b\prec c)}\succ (y\prec z)+x|_{a\succ (b\prec
c)}\succ (y\prec z)\\
& \equiv&0~~mod(S, (x|_{(a\succ b)\prec c}\succ y)\prec z).
\end{eqnarray*}
 \ \hfill
\end{proof}

An $\Omega$-word $u$ is a normal word if $u$ is one of the
following:
\begin{enumerate}
\item[i)]\  $u=x$, \ where $x\in X$.
\item[ii)]\ $u=v\succ w$, where $v$ and $w$ are normal words.
\item[iii)]\ $u=v\prec w$ with $v\neq v_1\succ v_2$, where $v_1,\ v_2,\ v,\ w$ are normal words.
\end{enumerate}

We denote $u$ by $[u]$ if $u$ is a normal word.

\ \

 Now, by Theorem
\ref{CDL},  we have the following corollary.

\begin{corollary}\label{cor1}
The set
$$
Irr (S) = \{ u \in (X,\Omega) |~u \neq v|_{(a\succ b)\prec c},~a,\
b,\ c\in (X,\Omega),\ v\ \mbox{is a}
\star\mbox{-}\Omega\mbox{-}\mbox{word}\}
$$
 is a $k$-basis of the  free L-algebra  $L(X)=k(X,\Omega|S)$. Moreover, $Irr (S)$ consists of all
 normal words in $(X,\Omega)$.
\end{corollary}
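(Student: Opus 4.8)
The plan is to handle the two assertions of the corollary separately. For the first — that $Irr(S)$ is a $k$-basis of $L(X)$ — I would simply combine Theorem \ref{GSBl}, which says that $S$ is a Gr\"obner--Shirshov basis in $k(X,\Omega)$, with the implication $\Rmnum{1})\Rightarrow\Rmnum{3})$ of Theorem \ref{CDL}. The only point that requires a line of verification is that the set $Irr(S)$ written in the corollary coincides with the set $Irr(S)$ appearing in Theorem \ref{CDL}, and this amounts to identifying the leading $\Omega$-word of each defining polynomial $f(x,y,z)=(x\succ y)\prec z-x\succ(y\prec z)\in S$. Under the ordering (\ref{1}) the two monomials $(x\succ y)\prec z$ and $x\succ(y\prec z)$ have the same value of $|\cdot|$, so their weights are compared at the second coordinate, where the operation $\delta_2$ (that is, $\prec$) outranks $\delta_1$ (that is, $\succ$); hence $\overline{f(x,y,z)}=(x\succ y)\prec z$, and the condition ``$u\neq v|_{\overline{s}}$ for all $s\in S$'' becomes exactly ``$u\neq v|_{(a\succ b)\prec c}$ for all $a,b,c\in(X,\Omega)$'', as in the corollary.

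For the ``moreover'' part I would prove by induction on $|u|_{\Omega}$ that an $\Omega$-word $u$ lies in $Irr(S)$ if and only if it is a normal word. The first observation is that $u\notin Irr(S)$ precisely when $u=v|_{(a\succ b)\prec c}$ for some $\star$-$\Omega$-word $v$ and some $a,b,c\in(X,\Omega)$, i.e.\ precisely when the $\Omega$-word $(a\succ b)\prec c=\delta_2(\delta_1(a,b),c)$ occurs as a sub-$\Omega$-word of $u$. Then I would distinguish the three possible forms of $u$. If $u=x\in X$, then $u$ has no proper sub-$\Omega$-words and is not itself of the forbidden shape, and $x$ is normal by clause i). If $u=v\succ w=\delta_1(v,w)$, the root of $u$ carries $\delta_1$, so $u$ itself can never be the forbidden pattern, and $u\in Irr(S)$ iff both $v,w\in Irr(S)$, iff (induction) both $v,w$ are normal, iff $u$ is normal by clause ii). If $u=v\prec w=\delta_2(v,w)$, then the forbidden pattern sits at the root of $u$ exactly when $v$ has root $\delta_1$, i.e.\ $v=v_1\succ v_2$; hence $u\in Irr(S)$ iff $v$ is not of the form $v_1\succ v_2$ and $v,w\in Irr(S)$, iff (induction) $v$ is not of the form $v_1\succ v_2$ and $v,w$ are normal, iff $u$ is normal by clause iii). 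This closes the induction and identifies $Irr(S)$ with the set of all normal words.

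The whole argument is essentially bookkeeping once Theorems \ref{CDL} and \ref{GSBl} are in hand; the only step that demands a little care is the equivalence in the last paragraph, where one must check that the recursive definition of ``normal word'' is the exact negation of ``contains a sub-$\Omega$-word $(a\succ b)\prec c$'' — in particular that the side condition $v\neq v_1\succ v_2$ in clause iii) corresponds precisely to a forbidden occurrence at the root of $u=v\prec w$, and that in the case $u=v\succ w$ no forbidden occurrence can straddle the root, so that the property passes transparently to $v$ and $w$. I do not expect any genuine obstacle beyond this structural verification; an alternative, equivalent route would be to describe the rewriting system determined by $S$ and observe directly that its irreducible words are exactly the normal words, but the inductive argument above seems cleanest.
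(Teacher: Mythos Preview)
Your proposal is correct and follows the same route as the paper: the paper simply says ``by Theorem~\ref{CDL}'' and states the corollary, leaving both the identification of the leading term $\overline{f(x,y,z)}=(x\succ y)\prec z$ and the equivalence ``$Irr(S)=$ normal words'' implicit. You have merely filled in those two verifications explicitly, and your inductive argument for the second part is the natural way to do it.
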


The following proposition follows from (\ref{0}) and Corollary
\ref{cor1}.
\begin{proposition}\label{p1}
For any $\Omega$-word $u$,  there exists a unique normal word $[u]$
such that $u=[u]$ in $L(X)$.
\end{proposition}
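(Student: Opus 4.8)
The plan is to get existence by Noetherian induction on the well-ordering $>$ of (\ref{1}), turning $\Omega$-words into normal words by the rewriting rule coming from $S$, and then to read off uniqueness directly from Corollary \ref{cor1}. For uniqueness: if $p$ and $q$ are normal words with $u = p$ and $u = q$ in $L(X)$, then $p$ and $q$ have the same image in $L(X) = k(X,\Omega)/Id(S)$; but by Corollary \ref{cor1} the normal words are exactly the elements of $Irr(S)$ and their images form a $k$-basis of $L(X)$, so two distinct ones cannot coincide, forcing $p = q$. Hence at most one normal word can represent $u$, and it remains only to produce one.

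For existence I would argue by induction on $u \in (X,\Omega)$ with respect to $>$, carrying along the bookkeeping fact $|[u]| = |u|$ (this is needed to run the induction, and is harmless because the relevant rewriting $(x\succ y)\prec z \mapsto x\succ(y\prec z)$ preserves the number of occurrences of variables). If $u = x \in X$, then $u$ is already normal and $[u] = u$. Otherwise $u = v \succ w$ or $u = v \prec w$ with $v, w \in (X,\Omega)$; since $|u| = |v| + |w|$ and $|v|, |w| \ge 1$, both $v$ and $w$ have strictly smaller weight than $u$, so $v, w < u$, and by the induction hypothesis there are normal words $[v], [w]$ with $v = [v]$, $w = [w]$ in $L(X)$ and $|[v]| = |v|$, $|[w]| = |w|$. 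Because $Id(S)$ is an $\Omega$-ideal, $\succ$ and $\prec$ are well defined on $L(X)$, so $u = [v] \succ [w]$, resp. $u = [v] \prec [w]$, in $L(X)$. In the $\succ$ case, $[v] \succ [w]$ is a normal word by clause (ii) of the definition, and we set $[u] = [v] \succ [w]$. In the $\prec$ case: if $[v]$ is not of the form $v_1 \succ v_2$, then $[v] \prec [w]$ is normal by clause (iii) and we set $[u] = [v]\prec[w]$; if $[v] = v_1 \succ v_2$ with $v_1, v_2$ normal, then (\ref{0}) gives $u = (v_1 \succ v_2)\prec [w] = v_1 \succ (v_2 \prec [w])$ in $L(X)$, and $v_2 \prec [w]$ is an $\Omega$-word with $|v_2 \prec [w]| = |v_2| + |w| < |v_1| + |v_2| + |w| = |[v]| + |w| = |v| + |w| = |u|$, hence $v_2\prec[w] < u$; by the induction hypothesis it has a normal form $[v_2\prec[w]]$, and we put $[u] = v_1 \succ [v_2\prec[w]]$, which is normal by clause (ii). In every case $|[u]| = |u|$, closing the induction.

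The main obstacle is precisely the $\prec$ case of the existence step: after applying (\ref{0}) to push the $\prec$ inward, one does not land on a normal word because $v_2 \prec [w]$ may again begin with a $\succ$, so the argument has to recurse. The crux is therefore to exhibit a well-founded measure that strictly drops under this recursion; the $X$-length $|\cdot|$ (equivalently, the first coordinate of $wt$) does the job, and the reason it works is exactly that the rewriting $(x\succ y)\prec z \mapsto x\succ(y\prec z)$ leaves $|\cdot|$ unchanged, which is what the identity $|[u]| = |u|$ in the induction hypothesis records. Everything else — that $\succ, \prec$ descend to the quotient, that the displayed pieces are genuinely normal words by the three clauses, and the spanning/independence input from Corollary \ref{cor1} — is routine.
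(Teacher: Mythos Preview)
Your proof is correct and matches the paper's approach. The paper does not spell out a proof, merely stating that the proposition follows from (\ref{0}) and Corollary~\ref{cor1}; your argument is exactly the natural unpacking of that remark---uniqueness from the linear independence of $Irr(S)$ in Corollary~\ref{cor1}, existence by the rewriting coming from (\ref{0})---and indeed the recursion you set up in the $\prec$ case is precisely the formula $[u\prec v]=u_1\succ[u_2\prec v]$ that the paper writes down immediately after the proposition.
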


We denote the set of all the normal words by $N$, i.e., $N=Irr (S)$.
Then, the free $L$-algebra has an expression
$L(X)=kN=\{\sum\alpha_iu_i~|~\alpha_i\in k,\ u_i\in N\}$ with a
$k$-basis $N$ and the operations $\prec,\ \succ$: for any $u,v\in
N$,
$$
u\prec v=[u\prec v], \ \ \ \ u\succ v=[u\succ v].
$$
Clearly, $[u\succ v]=u\succ v$ and
$$
[u\prec v]=\left\{
\begin{array} {ll} u\prec v &\mbox{if } u=u_1\prec u_2,\mbox{or } u\in X,\\
u_1\succ[u_2\prec v] &\mbox{if } u=u_1\succ u_2.
\end{array} \right.
$$

\section{Composition-Diamond lemma for $L$-algebras}

In this section, we establish Composition-Diamond lemma for
$L$-algebras. Remind that $\Omega=\{ \prec, \succ \}$.

We use still the ordering (\ref{1}) on $N$ defined as before. 

Let $u$ be a $\star$-$\Omega$-word and $s \in L(X)$. Then we call
$u|_s = u|_{\star\mapsto s}$ an $s$-word in $L(X)$. Let $u$ be a
$(\star_1,\star_2)$-$\Omega$-word and $s_1,~s_2 \in L(X)$. We call
$$
u|_{s_1,s_2} = u|_{\star_1\mapsto s_1,\star_2\mapsto s_2}
$$
 an $s_1$-$s_2$-word.

An $s$-word $u|_s$ is called a normal $s$-word if
$u|_{\overline{s}}\in N$.

It is noted that the $s$-word $u|_{{s}}$ is a normal
$s$-word if and only if $\overline {u|_{s}}=u|_{\overline{s}}$
as $\Omega$-words.

We will prove that the ordering (\ref{1}) on $N$ is monomial in the
sense that for any $\star$-$\Omega$-word $w$ and any $u,\ v\in N$,
$u>v$ implies $[w|_u]>[w|_v]$.

\begin{lemma}
The ordering (\ref{1}) on $N$ is monomial.
\end{lemma}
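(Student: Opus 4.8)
The plan is to induct on the number of operation symbols occurring in the $\star$-$\Omega$-word $w$, using the explicit description of the normal form $[\cdot]$ recalled at the end of the previous section (in particular the displayed recursion for $[u\prec v]$). When $w=\star$ we have $[w|_u]=[u]=u$ and $[w|_v]=v$, so $u>v$ is exactly the conclusion. When $w$ has at least one operation symbol, then since both operations are binary and $\star$ occurs in $w$ exactly once, $w$ must have one of the shapes $w'\circ c$ or $c\circ w'$ with $\circ\in\{\succ,\prec\}$, where $w'$ is a $\star$-$\Omega$-word with strictly fewer operation symbols and $c\in(X,\Omega)$ contains no $\star$. Setting $p=[w'|_u]$, $q=[w'|_v]$, $e=[c]$, the induction hypothesis gives $p>q$; since $w'|_u=p$, $w'|_v=q$ and $c=e$ in $L(X)$, Proposition~\ref{p1} identifies $[w|_u]$ with $[p\circ e]$ and $[w|_v]$ with $[q\circ e]$ (respectively with $[e\circ p]$ and $[e\circ q]$). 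Thus the whole statement reduces to four monotonicity properties of normal words: whenever $a,b,d\in N$ and $a>b$, (i) $a\succ d>b\succ d$, (ii) $d\succ a>d\succ b$, (iii) $[a\prec d]>[b\prec d]$, and (iv) $[d\prec a]>[d\prec b]$.

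For (i) and (ii) I would argue directly from the definition of $wt$, using two elementary remarks that I would isolate first: that $a>b$ forces $|a|\geq|b|$ (the first coordinate of $wt$ is the number of $X$-letters and the ordering is lexicographic), and that normalization does not change the multiset of $X$-letters (the defining relation $(x\succ y)\prec z=x\succ(y\prec z)$ does not). For (iii) I would induct on $|a|+|b|$, distinguishing according to whether $a$ and $b$ are of the form $\cdot\succ\cdot$ and applying the displayed recursion for $[\cdot\prec\cdot]$. The observation that glues the cases together is: if $a>b$ and $a=a_1\succ a_2$, then either $|a|>|b|$, or $|a|=|b|$ and $b=b_1\succ b_2$ as well — indeed if $|a|=|b|$ and $b$ were a variable or of the form $b_1\prec b_2$, then $\delta_1<\delta_2$ would give $wt(a)<wt(b)$. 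Using this, in every case except one the comparison is read off from the first coordinate (the $X$-letter count) or from the hypothesis $a>b$; the one genuine recursion is $a=a_1\succ a_2$, $b=b_1\succ b_2$ with $|a|=|b|$, $a_1=b_1$, $a_2>b_2$, where $[a\prec d]=a_1\succ[a_2\prec d]$ and $[b\prec d]=b_1\succ[b_2\prec d]$, and one applies the induction hypothesis to $a_2,b_2,d$ (noting $|[a_2\prec d]|=|a_2|+|d|$, so the first coordinates still agree). Property (iv) is handled by a parallel induction on $|d|$: if $d$ is a variable or of the form $d_1\prec d_2$ the comparison is immediate, and if $d=d_1\succ d_2$ it reduces to (iv) for $d_2,a,b$.

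The step I expect to be the main obstacle is property (iii), precisely because normalizing $a\prec d$ ``reaches into'' $a$ whenever $a=a_1\succ a_2$ (the normal form being $a_1\succ[a_2\prec d]$), so one cannot simply compare $wt(a\prec d)$ with $wt(b\prec d)$ as in (i). The two facts that make the bookkeeping go through — both worth recording as auxiliary remarks before the main induction — are the invariance of the $X$-letter count under normalization, which pins down the first coordinate of $wt$ of every normal form that appears, and the shape restriction on $b$ coming from $\delta_1<\delta_2$ stated above. Granting these, everything reduces to a finite, if slightly tedious, check of cases, and the lemma follows.
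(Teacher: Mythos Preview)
Your proposal is correct and follows essentially the same approach as the paper: both reduce to the four one-step monotonicity properties and identify case (iii), $[a\prec d]>[b\prec d]$, as the only nontrivial one. The paper handles (iii) by fully unrolling the right-$\succ$-spine $a=a_1\succ(a_2\succ\cdots\succ a_n)$ and comparing termwise, which is exactly the closed form of your induction on $|a|+|b|$; you are somewhat more careful than the paper in making the outer induction on $w$ explicit and in noting that (iv) also needs a short induction on $|d|$ rather than being literally ``obvious''.
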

\begin{proof}
To prove this lemma, we only need to prove that, for any $u,v,w\in
N$, $u>v$ implies $[u\succ w]>[v\succ w]$, $[w\succ u]>[w\succ v]$,
$[w\prec u]>[w\prec v]$ and $[u\prec w]>[v\prec w]$. But only the
final case needs to check, since the other three cases are just
obvious.

It is clear that $u$ has a unique expression:
$$
u=u_1\succ(u_2\succ( \cdots \succ(u_{n-1}\succ u_n)\cdots))
$$
where $n\geq 1$ and $u_n\neq a\succ b$ for any $a,b\in N$. For
example, if $u=a\prec b$, then $n=1$. Let
$$
v=v_1\succ(v_2\succ( \cdots \succ(v_{m-1}\succ v_m)\cdots))
$$
where $m\geq 1$ and $v_m\neq a\succ b$ for any $a,b\in N$. Then
$$
[u\prec w]=u_1\succ(u_2\succ( \cdots \succ(u_{n-1}\succ (u_n\prec
w))\cdots)),
$$
$$
[v\prec w]=v_1\succ(v_2\succ( \cdots \succ(v_{m-1}\succ (v_m\prec
w))\cdots)).
$$

If $|u|>|v|$, $[u\prec w]>[v\prec w]$ is obvious. We assume that
$|u|=|v|$. Since $u>v$, there must be a $j$ such that  $u_j>v_j$,
and for any $i<j$, $u_i=v_i$. So
$$
(u_{j}\succ( \cdots \succ(u_{n-1}\succ (u_n\prec
w))\cdots)))>(v_j\succ( \cdots \succ(v_{m-1}\succ (v_m\prec
w))\cdots))).
$$
It follows that $[u\prec w]>[v\prec w]$.
 \ \hfill \end{proof}

\ \

Now we define compositions of polynomials in $L(X)$.

\begin{definition}
Let $f, g \in L(X)$ with $f$ and $g$ monic.
\begin{enumerate}
\item[1)] \ Composition of right multiplication.

If $\overline{f}=u_1\succ u_2$ for some $u_1,\ u_2\in N$, then for
any $v\in N$, $f\prec v$ is called the composition of right
multiplication.

\item[2)] \  Composition of inclusion.

If $w=\overline{f}=u|_{\overline{g}}$ where $u|_{g}$ is a normal
$g$-word, then
$$
(f,g)_{w}=f-u|_g
$$
is called a composition of inclusion.
\end{enumerate}
\end{definition}

\begin{definition}
Let $S\subset L(X)$ be a monic set and $f,g\in S$.

The composition of right multiplication $f\prec v$ is called trivial
modulo $S$, denoted by $ f\prec v\equiv 0 \ mod(S), $ if
$$
f\prec v=\sum{\alpha}_iu_i|_{s_i},
$$
where each $\alpha_i\in k, \ s_i\in S, \ u_i|_{s_i}$ normal
$s_i$-word, and $u_i|_{\overline{s_i}}\leqslant \overline{f\prec
v}$.

The composition of inclusion $(f,g)_{w}$ is called trivial modulo
$(S, w)$, denoted by $(f,g)_{w}\equiv 0 \ mod(S,w)$, if
$$
(f,g)_{w}=\sum{\alpha}_iu_i|_{s_i},
$$
where each $\alpha_i\in k, \ s_i\in S, \ u_i|_{s_i}$ normal
$s_i$-word, and $u_i|_{\overline{s_i}}<w$.

$S$ is called a Gr\"{o}bner-Shirshov basis in $L(X)$ if any
composition of polynomials in $S$ is trivial.
\end{definition}

\begin{lemma}\label{le0}
Let $S\subset L(X)$ and $u|_s$ an $s$-word, $s\in S$. Assume
that each composition of right multiplication in $S$ is trivial
modulo $S$. Then, $u|_s$ has a presentation:
$$
u|_s=\sum\alpha_iu_i|_{s_i},
$$
where each $\alpha_i\in k, \ s_i\in S$, $u_i|_{s_i}$ normal
$s_i$-word and $u_i|_{\overline{s_i}}\leqslant \overline{u|_s}$.
\end{lemma}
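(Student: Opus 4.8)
The plan is to argue by induction on the size $|u|$ of the $\star$-$\Omega$-word $u$, after first isolating the only step that is not routine as a separate auxiliary claim about right multiplication by a normal word.

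The base case $u=\star$ is immediate: $u|_s=s$, and since $\overline s\in N$ this is already a normal $s$-word with the required bound. For the inductive step I would write $u=\delta(p,q)$ with $\delta\in\{\succ,\prec\}$, where exactly one of $p,q$ contains $\star$; assume $q$ does not, so that in $L(X)$ we may replace $q$ by its normal word $[q]$ (Proposition \ref{p1}). Applying the induction hypothesis to the shorter word $p$ yields $p|_s=\sum_i\alpha_i u_i|_{s_i}$ with each $u_i|_{s_i}$ a normal $s_i$-word and $u_i|_{\overline{s_i}}\le\overline{p|_s}$. In the three cases $[q]\succ p|_s$, $\ p|_s\succ[q]$, and $[q]\prec p|_s$, the preceding lemma (monomiality of the ordering on $N$) together with the explicit description of normal words does the job directly: each $[q]\succ u_i|_{s_i}$, resp. $u_i|_{s_i}\succ[q]$, resp. $[\,[q]\prec u_i|_{s_i}\,]$ (where one uses the explicit formula for $[\,w\prec(\cdot)\,]$ as a nest of $\succ$'s over a single $\prec$), is again a normal word once $\star$ is instantiated by $\overline{s_i}$, hence a normal $s_i$-word, and monomiality gives $\le\overline{u|_s}$ for its leading $\Omega$-word.

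The remaining case $u=p\prec[q]$ is the substantive one, and for it I would first prove the auxiliary claim: if $v|_t$ is a normal $t$-word with $t\in S$ and $w\in N$, then $v|_t\prec w=\sum_j\beta_j v_j|_{t_j}$ with each $v_j|_{t_j}$ a normal $t_j$-word and $v_j|_{\overline{t_j}}\le\overline{v|_t\prec w}$. This is proved by induction on the normal word $\overline{v|_t}=v|_{\overline t}$ (well ordered by $(\ref{1})$). If $v|_{\overline t}$ is not of the form $a\succ b$, then $v|_t\prec w$ is already the normal $t$-word $(v\prec w)|_t$. If $v=\star$ and $\overline t=a\succ b$, then $v|_t\prec w=t\prec w$ is exactly a composition of right multiplication of $t\in S$, trivial modulo $S$ by hypothesis. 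Otherwise $v=c\succ d$; using the identity $(\ref{0})$ in $L(X)$ we rewrite $v|_t\prec w=c|_t\succ(d|_t\prec w)$, and then if $\star$ lies in $c$ one checks $c|_t$ is a normal $t$-word and $c|_t\succ[\,d\prec w\,]$ is directly a normal $t$-word, while if $\star$ lies in $d$ then $d|_t$ is a normal $t$-word with strictly smaller leading word, the induction hypothesis applies to $d|_t\prec w$, and one multiplies the resulting presentation on the left by the normal word $[c]$, once more invoking monomiality. In each branch the bound on leading $\Omega$-words is preserved.

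The main obstacle is precisely this auxiliary claim: one must verify that its recursion terminates (it does, since stripping the head factor $c$ strictly lowers the leading normal word) and recognise that the sole irreducible obstruction — a subword of the form $t\prec w$ with $\overline t$ a $\succ$-word — is exactly what the hypothesis on compositions of right multiplication removes. Granting the claim, the case $u=p\prec[q]$ of the lemma follows by applying it to each summand $u_i|_{s_i}\prec[q]$ and chaining $v_j|_{\overline{t_j}}\le[\,u_i|_{\overline{s_i}}\prec[q]\,]\le[\,\overline{p|_s}\prec[q]\,]=\overline{u|_s}$ via monomiality, which closes the induction.
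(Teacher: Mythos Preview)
Your proof is correct and follows the same overall strategy as the paper's: induction on $|u|$, a split into four top-level cases according to the operation and the side on which $\star$ sits, with the only substantive case being ``$\star$-side $\prec$ normal word'', resolved by invoking the triviality of compositions of right multiplication.

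The organizational difference is this. The paper begins by replacing $u$ by its normal form as a $\star$-$\Omega$-word (via Proposition~\ref{p1}); then in Case~4 one has $u=v\prec w\in N$, which forces the original $v$ to be $\star$ or of $\prec$-form, and the paper uses that structural fact together with the induction hypothesis to conclude. You instead do \emph{not} pre-normalize $u$: you normalize only the $\star$-free factor $q$, push the problem down to $p$ by the outer induction, and then absorb the remaining right-$\prec$ by a separate, self-contained auxiliary claim (for any normal $t$-word $v|_t$ and any $w\in N$, express $v|_t\prec w$), proved by well-founded induction on the leading word $v|_{\overline t}$. Your auxiliary induction makes fully explicit the recursion that the paper leaves compressed in its ``For $|v|>1$'' sentence: peeling off the head $c$ in $v=c\succ d$ strictly decreases the leading word, and the base case $v=\star$ with $\overline t$ of $\succ$-form is exactly a right-multiplication composition. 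Both routes hinge on the same hypothesis at the same spot; yours trades the paper's up-front normalization of $u$ for a cleaner and more transparent termination argument in the hard case.
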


\begin{proof}
By Proposition \ref{p1}, we may assume that $u=[u]\in N$. We prove
the result by induction on $|u|$.

If $|u|=1$, $u|_s=s$ is a normal $s$-word.

If $|u|>1$, we have four cases to consider:

1) $u|_s=w\succ v|_s$,

2) $u|_s=v|_s\succ w$,

3) $u|_s=w\prec v|_s$,

4) $u|_s=v|_s\prec w$.

Since $w$ is a subword of the normal word $u$, $w$ is also an normal
word. Since $|v|<|u|$, $v|_s=\sum\alpha_ju_j|_{s_j}$, where each
$\alpha_j\in k, \ s_j\in S$, $u_j|_{s_j}$ normal $s_j$-word and
$u_j|_{\overline{s_j}}\leqslant \overline{v|_s}$. Moreover, we may
assume that $v|_s$ is a normal $s$-word.

In Case 1), 2) and 3), $\Omega$-word $u|_{\overline{s}}\in N$ and
there is nothing to prove. We only consider Case 4).

In Case 4), if $|v|=1$ and $\overline{s}=a\prec b$ for some $a,b\in
N$, then $\Omega$-word  $u|_{\overline{s}}\in N$. If $|v|=1$ and
$\overline{s}=a\succ b$, then by right multiplication, $u|_s=s\prec
w=\sum\alpha_iu_i|_{s_i}$, where each $\alpha_i\in k, \ s_i\in S,\
u_i|_{s_i}$ normal $s_i$-word and
$u_i|_{\overline{s_i}}\leqslant \overline{u|_s}$.

For $|v|>1$, since $u=[u]\in N$, $v=a\prec b$ for some $a,b\in N$.
Then by $v|_{\overline{s}}\in N$ we have
$u|_{\overline{s}}=v|_{\overline{s}}\prec w\in N$ as $\Omega$-words.
So, $u|_s$ is a normal $s$-word.

The lemma is proved.
 \ \hfill \end{proof}

\begin{lemma}\label{le}
Let $S$ be a Gr\"{o}bner-Shirshov basis in $L(X)$, $s_1,\ s_2\in S$,
$u_1|_{s_1}$ and $u_2|_{s_2}$ be normal $s_1$-word and normal
$s_2$-word respectively such that $w
=u_1|_{\overline{s_1}}=u_2|_{\overline{s_2}}$. Then,
$$
u_1|_{s_1}\equiv u_2|_{s_2} \ \ mod(S,w),
$$
where $u_1|_{s_1}\equiv u_2|_{s_2} \ \ mod(S,w)$ means $u_1|_{s_1}-
u_2|_{s_2}=\sum{\alpha}_iu_i|_{s_i}$ for some $\alpha_i\in k, \
s_i\in S, \ u_i|_{s_i}$ normal $s_i$-word such that
$u_i|_{\overline{s_i}}<w$.
\end{lemma}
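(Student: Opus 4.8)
The plan is to run the argument of $\mathrm{I)}\Rightarrow\mathrm{II)}$ of Theorem \ref{CDL} inside the quotient $L(X)=kN$, organised around the relative position of the two subword occurrences of $\overline{s_1}$ and $\overline{s_2}$ in the common $\Omega$-word $w=u_1|_{\overline{s_1}}=u_2|_{\overline{s_2}}$. Since an $\Omega$-word is a rooted tree and the $\star$ of $u_i$ sits at a leaf of $u_i$, the occurrence of $\overline{s_i}$ in $w=u_i|_{\overline{s_i}}$ is a full subtree; hence the two occurrences are either nested (one inside the other) or disjoint --- there is no proper overlap, which is exactly why the definition of composition in $L(X)$ has no ``overlap'' case. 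So I would split into: (1) the occurrences of $\overline{s_1},\overline{s_2}$ in $w$ are disjoint; (2) one contains the other.

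Before the case analysis I would record a reduction step that absorbs the normal-form bookkeeping. From the monomiality of the ordering on $N$ (the lemma above) one checks that for every $\star$-$\Omega$-word $v$ and every $g\in L(X)$ one has $\overline{v|_g}=[v|_{\overline g}]$: expanding $g$ over $N$ with largest term $\overline g$, strict monomiality forces $[v|_{g'}]\neq[v|_{g''}]$ for distinct $N$-monomials $g',g''$ of $g$, so no cancellation occurs and the maximum comes from $\overline g$. Combined with Lemma \ref{le0} --- whose hypothesis holds since $S$, being a Gr\"obner--Shirshov basis, has all compositions of right multiplication trivial --- this gives the tool I will use repeatedly: for any $\star$-$\Omega$-word $v$ and any $t\in S$, the $t$-word $v|_t$ rewrites as $\sum_j\alpha_j p_j|_{s_j}$ with $p_j|_{s_j}$ normal $s_j$-words and $p_j|_{\overline{s_j}}\leq[v|_{\overline t}]$. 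Every leading-word estimate below is produced by applying this.

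In case (1) there is a $(\star_1,\star_2)$-$\Omega$-word $\Pi$ with $\Pi|_{\overline{s_1},\overline{s_2}}=w$, $u_1|_{s_1}=\Pi|_{s_1,\overline{s_2}}$ and $u_2|_{s_2}=\Pi|_{\overline{s_1},s_2}$, so by $k$-linearity of $\Pi|$ in each slot,
$$
u_1|_{s_1}-u_2|_{s_2}=\Pi|_{s_1,\overline{s_2}}-\Pi|_{\overline{s_1},s_2}=-\,\Pi|_{s_1,\,s_2-\overline{s_2}}+\Pi|_{s_1-\overline{s_1},\,s_2}.
$$
Expanding $s_2-\overline{s_2}$ (resp. $s_1-\overline{s_1}$) over $N$ writes the first summand as a $k$-linear combination of $s_1$-words $\Pi|_{s_1,f}$ with $f\in N$, $f<\overline{s_2}$, and the second as a $k$-linear combination of $s_2$-words $\Pi|_{e,s_2}$ with $e\in N$, $e<\overline{s_1}$. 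By the reduction step and monomiality, $\overline{\Pi|_{s_1,f}}\leq[\Pi|_{\overline{s_1},f}]<[\Pi|_{\overline{s_1},\overline{s_2}}]=w$, and likewise $\overline{\Pi|_{e,s_2}}<w$, so Lemma \ref{le0} rewrites each of them into normal $s_j$-words with leading word $<w$; hence $u_1|_{s_1}\equiv u_2|_{s_2}\ mod(S,w)$. In case (2), say the occurrence of $\overline{s_2}$ lies inside that of $\overline{s_1}$, so $\overline{s_1}=u|_{\overline{s_2}}$ for a normal $s_2$-word $u|_{s_2}$ and $u_2=u_1|_u$. Then $(s_1,s_2)_{\overline{s_1}}=s_1-u|_{s_2}$ is a composition of inclusion, and since it is trivial modulo $(S,\overline{s_1})$ we get $s_1-u|_{s_2}=\sum_t\alpha_t v_t|_{s_t}$ with $v_t|_{s_t}$ normal $s_t$-words and $v_t|_{\overline{s_t}}<\overline{s_1}$. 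Hence $u_1|_{s_1}-u_2|_{s_2}=u_1|_{s_1-u|_{s_2}}=\sum_t\alpha_t(u_1|_{v_t})|_{s_t}$, where each $(u_1|_{v_t})|_{s_t}$ is an $s_t$-word with $\overline{(u_1|_{v_t})|_{s_t}}\leq[u_1|_{v_t|_{\overline{s_t}}}]<[u_1|_{\overline{s_1}}]=u_1|_{\overline{s_1}}=w$, the strict step by monomiality since $v_t|_{\overline{s_t}}<\overline{s_1}$ in $N$ and $[u_1|_{\overline{s_1}}]=u_1|_{\overline{s_1}}$ because $u_1|_{s_1}$ is a normal $s_1$-word. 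Applying the reduction step to each $(u_1|_{v_t})|_{s_t}$ puts $u_1|_{s_1}-u_2|_{s_2}$ in the required form; the subcase in which $\overline{s_1}$ lies inside $\overline{s_2}$ is symmetric.

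The main obstacle is not the combinatorics --- the nested/disjoint dichotomy is precisely as in the classical Composition--Diamond lemma --- but that $L(X)$ is a proper quotient of the free $\Omega$-algebra: substituting polynomials into $\star$-words destroys normality and produces lower terms, so leading-word bounds are not preserved verbatim and every intermediate $s$-word must be rerouted through Lemma \ref{le0}. Making this watertight rests on the identity $\overline{v|_g}=[v|_{\overline g}]$ together with strictness of monomiality to exclude cancellation; that bookkeeping is the delicate part.
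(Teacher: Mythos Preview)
Your proof is correct and follows essentially the same route as the paper's: the same disjoint/nested dichotomy on the subtree occurrences of $\overline{s_1},\overline{s_2}$ in $w$, the same telescoping identity via a $(\star_1,\star_2)$-word $\Pi$ in the disjoint case, the same appeal to triviality of the inclusion composition in the nested case, and Lemma~\ref{le0} to restore normality throughout. The only difference is cosmetic: you make the identity $\overline{v|_g}=[v|_{\overline g}]$ explicit up front, whereas the paper uses it tacitly when writing chains like $\overline{\Pi|_{s_1-\overline{s_1},s_2}}=\Pi|_{\overline{s_1-\overline{s_1}},\overline{s_2}}<w$.
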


\begin{proof}
Since the operations $\prec$ and $\succ$ are not associative, there
are only two cases to consider.

1)~$\overline{s_1}$ and $\overline{s_2}$ are disjoint in $w$. Then
there exists a $(\star_1,\star_2)$-$\Omega$-word $\Pi$ such that
$$
\Pi|_{\overline{s_1},\overline{s_2}}=u_1|_{\overline{s_1}}=u_2|_{\overline{s_2}},
$$
where $\Pi|_{s_1,s_2}$ is an  $s_1$-$s_2$-word. Then

$$
u_1|_{s_1}-u_2|_{s_2}=\Pi|_{s_1,\overline{s_2}}-\Pi|_{\overline{s_1},s_2}
=-\Pi|_{s_1,s_2-\overline{s_2}}+\Pi|_{s_1-\overline{s_1},s_2}.
$$

Let
$$
\Pi|_{s_1-\overline{s_1},s_2}=\sum_r \alpha_{2r}u_{2r}|_{s_2},
$$
$$
-\Pi|_{s_1,s_2-\overline{s_2}}=\sum_t \alpha_{1t}u_{1t}|_{s_1}.
$$
Since $S$ is a Gr\"{o}bner-Shirshov basis, by Lemma \ref{le0}, we
have
$$
u_{2r}|_{s_2}=\sum_n\beta_{rn}v_{rn}|_{s_{_{rn}}} \ \ \ \ \
\mbox{and} \ \ \ \ \
u_{1t}|_{s_1}=\sum_m\beta_{tm}v_{tm}|_{s_{_{tm}}},
$$
where  $\beta_{rn},\ \beta_{tm}\in k$, $s_{rn},\ s_{tm}\in S$,
$v_{rn}|_{s_{_{rn}}}$ normal $s_{rn}$-word,
$v_{tm}|_{s_{_{tm}}}$ normal $s_{tm}$-word, and
$v_{tm}|_{\overline{s_{_{tm}}}}\leqslant \overline{u_{1t}|_{s_1}}$,
$v_{rn}|_{\overline{s_{_{rn}}}}\leqslant \overline{u_{2r}|_{s_2}}$.
Then
$$
u_1|_{s_1}-u_2|_{s_2}=\sum_{t,m}\alpha_{1t}\beta_{tm}v_{tm}|_{s_{_{tm}}}+
\sum_{r,n}\alpha_{2r}\beta_{rn}v_{rn}|_{s_{_{rn}}}.
$$
Since
\begin{eqnarray*}
&&v_{tm}|_{\overline{s_{_{tm}}}}\leqslant
\overline{u_{1t}|_{s_1}}\leqslant
\overline{\Pi|_{s_1,s_2-\overline{s_2}}}=\Pi|_{\overline{s_1},
\overline{s_2-\overline{s_2}}}<\Pi|_{\overline{s_1},\overline{s_2}}=w
\ \ \mbox{ and }\\
&&v_{rn}|_{\overline{s_{_{rn}}}}\leqslant
\overline{u_{2r}|_{s_2}}\leqslant
\overline{\Pi|_{s_1-\overline{s_1},s_2}}=\Pi|_{\overline{s_1-\overline{s_1}},
\overline{s_2}}<\Pi|_{\overline{s_1},\overline{s_2}}=w,
\end{eqnarray*}
we have
$$
u_1|_{s_1}\equiv u_2|_{s_2}~mod(S,w).
$$

2)~One of $\overline{s_1}$, $\overline{s_2}$ is contained in the
other. We may assume that $\overline{s_2}$ is contained in
$\overline{s_1}$. Then $\overline{s_1}=u|_{\overline{s_2}}$ for some
normal $s_2$-word $u|_{s_2}$. So
$$
w=u_1|_{\overline{s_1}}=u_1|_{u|_{\overline{s_2}}}
$$
and
$$
u_1|_{s_1}-u_2|_{s_2}=u_1|_{s_1}-u_1|_{u|_{s_2}}=u_1|_{s_1-u|_{s_2}}.
$$
Since S is a Gr\"{o}bner-Shirshov basis in $L(X)$, we have
$$
s_1-u|_{s_2}=\sum_t \alpha_tv_t|_{s_t}
$$
where each $\alpha_t\in k$, $s_t\in S$, $v_t|_{s_t}$ normal
$s_t$-word, and $v_t|_{\overline{s_t}}<\overline{s_1}$. Let
$u_1|_{v_t|_{s_t}}=u_{1t}|_{s_t}$. Then by Lemma \ref{le0}, we have
$$
u_{1t}|_{s_t}=\sum_n\beta_{tn}v_{tn}|_{s_{_{tn}}}
$$
where each $\beta_n\in k$, $s_{tn}\in S$, $v_{tn}|_{s_{_{tn}}}$
normal $s_{tn}$-word, and
$v_{tn}|_{\overline{s_{_{tn}}}}\leqslant \overline{u_{1t}|_{s_t}}$.
So
$$
u_1|_{s_1}-u_2|_{s_2}=\sum_{t}\alpha_{t}u_1|_{v_t|_{s_t}}=\sum_{t}\alpha_{t}u_{1t}|_{s_t}
=\sum_{t,n}\alpha_{t}\beta_{tn}v_{tn}|_{s_{_{tn}}}
$$
with
$$
v_{tn}|_{\overline{s_{_{tn}}}}\leqslant
\overline{u_{1t}|_{s_t}}<u_1|_{\overline{s_1}}=w.
$$
It follows that
$$
u_1|_{s_1}\equiv u_2|_{s_2}~mod(S,w).
$$
The proof is completed. \ \hfill \end{proof}


\begin{theorem}\label{cd}
(Composition-Diamond lemma for L-algebras) \  Let $S\subset L(X)$ be
a monic set and the ordering $>$ defined on $N$ as (\ref{1}). Then
the following statements are equivalent:
\begin{enumerate}
\item[\Rmnum{1})] \ $S$ is a Gr\"{o}bner-Shirshov basis in $L(X)$.
\item[\Rmnum{2})] \
$f\in Id(S)\Rightarrow \overline{f}=u|_{\overline{s}}$ for some
normal $s$-word $u|_s$, $s\in S$.
\item[\Rmnum{3})] \
The set
$$
Irr(S)=\{u\in N|~u\neq v|_{\overline{s}},\ s\in S,\ v|_{s}\mbox{ is
a normal s-word}\}
$$
is a linear basis of the L-algebra $L(X|~S)=L(X)/Id(S)$, where
$Id(S)$ is the ideal of $L(X)$ generated by $S$.
\end{enumerate}
\end{theorem}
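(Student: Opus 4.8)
The plan is to prove the cyclic chain of implications $\Rmnum{1})\Rightarrow\Rmnum{2})\Rightarrow\Rmnum{3})\Rightarrow\Rmnum{1})$, mirroring the structure of the proof of Theorem \ref{CDL} but now taking into account the two kinds of compositions (right multiplication and inclusion) and the fact that multiplication in $L(X)$ is carried out on normal words via the bracketing rule for $[u\prec v]$. Throughout I would freely use Proposition \ref{p1} to reduce arbitrary $\Omega$-words to normal words, Lemma \ref{le0} to rewrite an arbitrary $s$-word as a combination of normal $s$-words with non-increasing leading terms, and Lemma \ref{le} to handle overlaps of two leading words.

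\textbf{$\Rmnum{1})\Rightarrow\Rmnum{2})$.} Let $0\neq f\in Id(S)$. Since $Id(S)$ is spanned by elements $u|_s$ with $s\in S$ and $u$ a $\star$-$\Omega$-word, Lemma \ref{le0} lets me write $f=\sum_{i=1}^{n}\alpha_i u_i|_{s_i}$ where each $u_i|_{s_i}$ is a \emph{normal} $s_i$-word. Set $w_i=u_i|_{\overline{s_i}}$ and order so that $w_1=\cdots=w_l>w_{l+1}\geq\cdots$, and induct on the pair $(w_1,l)$ with $w_1$ descending in the well ordering and $l$ the number of maximal terms. If $l=1$ then $\overline{f}=w_1=u_1|_{\overline{s_1}}$ and we are done. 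If $l\geq 2$, write $\alpha_1u_1|_{s_1}+\alpha_2u_2|_{s_2}=(\alpha_1+\alpha_2)u_1|_{s_1}+\alpha_2(u_2|_{s_2}-u_1|_{s_1})$. Because $\overline{s_1}$ and $\overline{s_2}$ either are disjoint in $w_1$ or one contains the other, Lemma \ref{le} gives $u_2|_{s_2}-u_1|_{s_1}=\sum\beta_j v_j|_{t_j}$ with $v_j|_{\overline{t_j}}<w_1$; substituting reduces either $l$ or, when $\alpha_1+\alpha_2=0$ and $l=2$, reduces $w_1$. (I should double-check that Lemma \ref{le}'s hypothesis is met, i.e.\ that $w_1$ really is an overlap pattern of $\overline{s_1}$ and $\overline{s_2}$ inside a normal word — this is where the structure of normal words and the monomial property of Lemma 4.1 get used.)

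\textbf{$\Rmnum{2})\Rightarrow\Rmnum{3})$.} Given any $f\in L(X)=kN$, repeatedly replace the leading normal word $\overline{f}$ whenever it equals some $v|_{\overline{s}}$ (a normal $s$-word) by $\overline{f}-v|_s$, using Lemma \ref{le0} to keep everything a combination of normal $s$-words with strictly smaller leading terms; since $>$ is a well ordering this terminates, so $Irr(S)$ spans $L(X|S)$. For linear independence, suppose $g=\sum\alpha_i u_i$ with $u_i\in Irr(S)$, $u_1>\cdots>u_n$, $\alpha_i\neq 0$, and $g=0$ in $L(X|S)$, i.e.\ $g\in Id(S)$; by $\Rmnum{2})$, $u_1=\overline{g}=v|_{\overline{s}}$ for some normal $s$-word, contradicting $u_1\in Irr(S)$.

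\textbf{$\Rmnum{3})\Rightarrow\Rmnum{1})$.} For a composition $f\prec v$ (right multiplication) or $(f,g)_w$ (inclusion), the element lies in $Id(S)$ and, by the spanning half of $\Rmnum{3})$ together with Lemma \ref{le0}, can be written as $\sum\beta_j u_j|_{s_j}$ with normal $s_j$-words and $u_j|_{\overline{s_j}}\leq\overline{f\prec v}$ (resp.\ $<w$); for the inclusion case note $\overline{(f,g)_w}<w$, and for right multiplication the required bound $u_j|_{\overline{s_j}}\leq\overline{f\prec v}$ is automatic. Hence every composition is trivial, so $S$ is a Gr\"obner--Shirshov basis. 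The main obstacle I anticipate is the $\Rmnum{1})\Rightarrow\Rmnum{2})$ step: one must be careful that the elementary transformation $u_1|_{s_1}\rightsquigarrow u_2|_{s_2}$ is legitimate — that the two $s_i$-words genuinely sit inside the \emph{same} normal word $w_1$ in one of the two overlap configurations, and that "normal $s$-word" status is preserved under the rewriting, which is exactly what Lemmas \ref{le0} and \ref{le} are engineered to guarantee. The right-multiplication composition is the genuinely new ingredient compared with the $\Omega$-algebra case, and it is needed precisely because $\overline{f}=u_1\succ u_2$ forces $\overline{f\prec v}\neq\overline{f}\prec v$ under the bracketing rule, so leading terms are not stable under $\prec$ on the right.
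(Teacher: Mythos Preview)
Your proposal is correct and follows essentially the same route as the paper: use Lemma~\ref{le0} to reduce to normal $s$-words and Lemma~\ref{le} for the key step in $\Rmnum{1})\Rightarrow\Rmnum{2})$, then argue $\Rmnum{2})\Rightarrow\Rmnum{3})\Rightarrow\Rmnum{1})$ exactly as in Theorem~\ref{CDL}. The paper's own proof is in fact terser---it does $\Rmnum{1})\Rightarrow\Rmnum{2})$ via Lemmas~\ref{le0} and~\ref{le} and then simply refers back to Theorem~\ref{CDL} for the rest---so your explicit treatment of the right-multiplication composition in $\Rmnum{3})\Rightarrow\Rmnum{1})$ spells out a detail the paper leaves implicit.
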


\begin{proof}
$\Rmnum{1})\Rightarrow \Rmnum{2})$. \ Let $S$ be a
Gr\"{o}bner-Shirshov basis and $0\neq f\in Id(S)$. By Lemma
\ref{le0}, we can assume that
$$
f=\sum_{i=1}^n\alpha_iu_i|_{s_i},
$$
where each $\alpha_i\in k,  \ s_i\in S$ and $u_i|_{s_i}$ normal
$s_i$-word. Let
$$
w_i=u_i|_{\overline{s_i}}, \
w_1=w_2=\cdots=w_{_l}>w_{_{l+1}}\geq\cdots
$$

We prove the theorem by induction on $l$ and $w_1$.

If $l=1$, then
$\overline{f}=\overline{u_1|_{s_1}}=u_1|_{\overline{s_1}}$ and the
result holds.

Assume that $l\geq 2$. Then
$$
\alpha_1u_1|_{s_1}+\alpha_2u_2|_{s_2}=(\alpha_1+\alpha_2)u_1|_{s_1}+\alpha_2(u_2|_{s_2}-u_1|_{s_1}).
$$
By Lemma \ref{le}, we have
$$
u_2|_{s_2}\equiv u_1|_{s_1} \ \ mod(S,w_1).
$$

Now, the remainder proof of the theorem is almost the same as one in
Theorem \ref{CDL}.
 \ \hfill \end{proof}

\section{Applications}

In this section, we use the Theorem \ref{cd} to prove four embedding
theorems for $L$-algebras. We give Gr\"{o}bner-Shirshov bases of a
free dialgebra and the free product of two $L$-algebras,
respectively and then the norm forms are obtained for such algebras.

Let $X$ be a set. Then we denote the set of all the normal words in
the free $L$-algebra $L(X)$ defined as before by $N(X)$.

Denote by $\mathbb{N}^+$ the set of all positive natural numbers and
$\mathbb{N}=\mathbb{N}^+\cup\{0\}$.

The following lemma is straightforward.

\begin{lemma}\label{eml}
Let $A$ be an $L$-algebra over a field $k$  with a $k$-basis $X =
\{x_i|i \in I\}$. Then $A$ has a representation $A =L(X|S)$, where
$S = \{x_i\prec x_j - \{x_i\prec x_j\},\ x_i\succ x_j - \{x_i\succ
x_j\}|i,j \in I\}$, $\{x_i\succ x_j\}$ and $\{x_i\prec x_j\}$ are
linear combinations of $x_t\in X$. Moreover, with the ordering
(\ref{1}) on $N(X)$, $S$ is a Gr\"{o}bner-Shirshov basis in $L(X)$.
\end{lemma}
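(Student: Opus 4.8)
The plan is to first isolate the elementary facts, then check the two kinds of compositions, and finally read off the presentation. Throughout, write $\{x_i\succ x_j\}=\sum_t\alpha_{ij}^t x_t$ and $\{x_i\prec x_j\}=\sum_t\beta_{ij}^t x_t$ for the structure constants of $A$, and let $\pi:L(X)\to A$ be the $L$-algebra homomorphism determined by $x_i\mapsto x_i$; it is surjective because $X$ generates $A$, and $S\subseteq\mathrm{Ker}\,\pi$ is immediate from the definition of the structure constants. Since each right-hand side is a $k$-combination of length-one words while $|x_i\prec x_j|=|x_i\succ x_j|=2$, the ordering (\ref{1}) forces $\overline{x_i\prec x_j-\{x_i\prec x_j\}}=x_i\prec x_j$ and $\overline{x_i\succ x_j-\{x_i\succ x_j\}}=x_i\succ x_j$; in particular every element of $S$ is monic with a length-two leading word and distinct elements of $S$ have distinct leading words. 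Then I would check $Irr(S)=X$: a single variable is never of the form $v|_{\overline s}$, while a normal word $u$ with $|u|\geqslant 2$ has a deepest non-variable subterm, necessarily of the shape $x_a\diamond x_b$ with $\diamond\in\{\prec,\succ\}$, so $u=v|_{\overline s}$ for the corresponding $s\in S$ and the normal $s$-word $v|_s$ obtained by substitution.

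Next I would establish --- directly, and without appealing to Lemma~\ref{le0}, which presupposes exactly what we want --- the following reduction statement: every $h\in L(X)$ admits a presentation $h=\sum_i\gamma_i v_i|_{s_i}+r$ with $s_i\in S$, each $v_i|_{s_i}$ a normal $s_i$-word satisfying $v_i|_{\overline{s_i}}\leqslant\overline h$, and $r\in kX$. The proof is induction on $\overline h$ along the well-ordering: if $\overline h$ has length at most one then $h\in kX$ already (words of length at least two exceed every variable in the ordering (\ref{1})); if $|\overline h|\geqslant 2$, pick the subterm $x_a\diamond x_b$ as above, put $s=x_a\diamond x_b-\{x_a\diamond x_b\}\in S$, and let $w$ be the $\star$-$\Omega$-word with $w|_{\overline s}=\overline h$, so that $w|_s$ is a normal $s$-word with leading word $\overline h$; then $h-\lambda\,w|_s$ (with $\lambda$ the leading coefficient of $h$) has strictly smaller leading word, because replacing $x_a\diamond x_b$ by a variable strictly shortens the word. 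This is the point where it matters that $S$ rewrites a length-two word into length-one words, so that no ``non-normal $s$-word'' pathology occurs and the induction closes. In particular $Irr(S)=X$ spans $L(X|S)$.

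Now I would verify that $S$ is a Gr\"{o}bner-Shirshov basis. Compositions of inclusion are automatically trivial: if $w=\overline f=u|_{\overline g}$ for a normal $g$-word $u|_g$, then $|\overline f|=|\overline g|=2$ forces $u=\star$ and $\overline f=\overline g$, whence $f=g$ and $(f,g)_w=f-g=0$. For a composition of right multiplication the only candidates are $s\prec v$ with $s=x_i\succ x_j-\{x_i\succ x_j\}$ (only the $\succ$-relations have leading word of the form $u_1\succ u_2$) and $v\in N(X)$. Since $s\prec v\in Id(S)$, the reduction statement gives $s\prec v=\sum_i\gamma_i v_i|_{s_i}+r$ with normal $s_i$-words, $v_i|_{\overline{s_i}}\leqslant\overline{s\prec v}$, and $r\in kX$; applying $\pi$, and using $\pi(s)=0$ (hence $\pi(s\prec v)=\pi(s)\prec\pi(v)=0$) together with $\pi(v_i|_{s_i})=0$, we obtain $\pi(r)=0$, so $r=0$ since $\pi$ restricts to the identity on $kX$. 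Thus $s\prec v\equiv 0\ mod(S)$, and $S$ is a Gr\"{o}bner-Shirshov basis.

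Finally, by Theorem~\ref{cd} the set $Irr(S)=X$ is a linear basis of $L(X|S)$, and in $L(X|S)$ the products of basis elements are $x_i\prec x_j=\{x_i\prec x_j\}$ and $x_i\succ x_j=\{x_i\succ x_j\}$ --- the same multiplication table as in $A$ --- so the induced surjection $\overline{\pi}:L(X|S)\to A$ carries the basis $X$ bijectively onto the basis $X$ of $A$ and is an isomorphism; that is, $A=L(X|S)$. (Equivalently, injectivity of $\overline{\pi}$ drops out of the reduction statement together with $\pi|_{kX}=\mathrm{id}$, bypassing Theorem~\ref{cd}.) The step I expect to be the main obstacle is the triviality of the right-multiplication compositions: the leftover part $r\in kX$ does not vanish for purely formal reasons, and its vanishing is precisely where one must use that $A$ is an honest $L$-algebra --- i.e.\ that $(x_i\succ x_j)\prec v=x_i\succ(x_j\prec v)$ holds in $A$ --- which is packaged cleanly by transporting this identity along $\pi$. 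The combinatorial identification $Irr(S)=X$ and the self-contained reduction statement need only routine care.
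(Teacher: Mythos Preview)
Your proof is correct. The paper omits the proof entirely, declaring the lemma ``straightforward,'' so there is no argument to compare against directly. What the authors presumably have in mind can be read off from the right-multiplication check in the proof of Theorem~\ref{em2}: reduce $v$ to a linear combination of variables using $S$, then for $v=x_k$ compute
\[
(x_i\succ x_j-\{x_i\succ x_j\})\prec x_k \;\equiv\; \{x_i\succ\{x_j\prec x_k\}\}-\{\{x_i\succ x_j\}\prec x_k\}\;=\;0
\]
by the entanglement identity in $A$. Your route is a clean repackaging of the same idea: rather than tracking structure constants through this computation, you set up the reduction statement once (which is straightforward precisely because the relations in $S$ shorten length, so the $s$-words arising are automatically normal and the induction closes), and then kill the remainder $r\in kX$ by pushing through the canonical surjection $\pi:L(X)\to A$. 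This is arguably more transparent, since the place where the $L$-algebra axiom on $A$ enters --- namely, the well-definedness of $\pi$ --- is isolated in a single step rather than spread across a constants calculation. Both approaches yield the same $Irr(S)=X$ and the same isomorphism $L(X|S)\cong A$; yours just avoids a small amount of bookkeeping.
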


\subsection{Gr\"{o}bner-Shirshov basis for the free
product of two $L$-algebras}
\begin{definition}\label{d5.9}
Let $L_1,L_2$ be $L$-algebras. Then an $L$-algebra $L_1*L_2$ with
two $L$-algebra homomorphisms \  $\varepsilon_i: L_i\rightarrow
L_1*L_2$, $i=1,2$ is called the free product of $L_1$ and $L_2$, if
the following diagram commutes:

\setlength {\unitlength}{1cm}
\begin{picture}(7, 3)
\put(4.2,2.5){\vector(1,0){1.7}}\put(6.7,2.3){\vector(0,-1){1.9}}
\put(9.2,2.5){\vector(-1,0){1.7}}\put(4.1, 2.3){\vector(1,-1){2}}
\put(9.2,2.3){\vector(-1,-1){2}}
 \put(6.5,0){$L$}\put(6.9,1.3){$\exists ! f$}
\put(6,2.4){$L_1*L_2$} \put(3.5,2.4){$L_1$} \put(9.5,2.4){$L_2$}
\put(4.9, 2.6){$\varepsilon_1$} \put(8.3,
2.6){$\varepsilon_2$}\put(4.5,0.9){$\forall
f_1$}\put(8.7,0.9){$\forall f_2$}
\end{picture}\\
where $L$ is any $L$-algebra and $f_1,f_2$ are $L$-algebra
homomorphisms. It means that
$(L_1*L_2,(\varepsilon_1,\varepsilon_2))$ is a universal arrow in
the sense of S. Maclane \cite{SM}.
\end{definition}

For $L$-algebras $L_1$ and $L_2$, let $X=\{x_i|i\in I\}$ and
$Y=\{y_j|i\in J\}$ are  $k$-bases of $L_1$ and $L_2$, respectively.
Then $L_1 =L(X|S_1)$ and $L_2 =L(Y|S_2)$, where
\begin{eqnarray*}
&&S_1 = \{x_i\prec x_j-\{x_i\prec x_j\},\ x_i\succ x_j-\{x_i\succ
x_j\}|i,\ j \in I\},\\
&&S_2 = \{y_i\prec y_j-\{y_i\prec y_j\},\ y_i\succ y_j-\{y_i\succ
y_j\}|i,\ j \in J\}.
\end{eqnarray*}
It is clear that
$L_1*L_2=L(X_1\cup X_2|S_1\cup S_2)$.

Let $X\cup Y$ be a well-ordered set. Let $S=S_1\cup S_2\cup F_1\cup
F_2$, where
\begin{eqnarray*}
F_1&=&\{f_{1(ij)}=x_i\succ ((((x_j\prec u)\prec v_1)\prec\cdots)
\prec v_n)-\\
&&\ \ (((\{x_i\succ x_j\}\prec u)\prec v_1)\prec\cdots)\prec v_n\ |
\ n\in \mathbb{N},\ u,v_l\in N(X\cup Y),\\
&&\ \ u\in Irr(S_1\cup S_2)-X,\ v_l\in Irr(S_1\cup S_2),\ l=1,\dots, n\},\\
F_2&=&\{f_{2(ij)}=y_i\succ ((((y_j\prec u)\prec v_1)\prec\cdots)
\prec v_n)-\\
&&\ \ (((\{y_i\succ y_j\}\prec u)\prec v_1)\prec\cdots)\prec v_n\ |\
n\in
\mathbb{N},\ u,v_l\in N(X\cup Y),\\
&&\ \ u\in Irr(S_1\cup S_2)-Y,\ v_l\in Irr(S_1\cup S_2),\ l=1,\dots,
n\}.
\end{eqnarray*}

Then we have the following theorem.

\begin{theorem}\label{t10}
With the ordering (\ref{1}) on $N(X\cup Y)$, $S$ is a
Gr\"{o}bner-Shirshov basis of $L_1*L_2=L(X\cup Y|S_1\cup S_2)$.
\end{theorem}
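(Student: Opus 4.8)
The plan is to verify the hypothesis of the Composition--Diamond lemma for $L$-algebras (Theorem \ref{cd}): every composition of elements of $S=S_1\cup S_2\cup F_1\cup F_2$ is trivial. First one records the leading words. For $s\in S_1$ one has $\overline{s}\in\{x_i\succ x_j,\ x_i\prec x_j\}$ and for $s\in S_2$, $\overline{s}\in\{y_i\succ y_j,\ y_i\prec y_j\}$; and $\overline{f_{1(ij)}}=x_i\succ((((x_j\prec u)\prec v_1)\prec\cdots)\prec v_n)$, $\overline{f_{2(ij)}}=y_i\succ((((y_j\prec u)\prec v_1)\prec\cdots)\prec v_n)$, since in each generator of $S$ the displayed word is normal while the other summand, after the linear combinations $\{x_i\succ x_j\}$ etc.\ of single letters are substituted, is a $k$-combination of strictly shorter normal words, so the ordering (\ref{1}) picks it out. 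As $\prec$ and $\succ$ are non-associative, any two leading words are disjoint (no composition) or one is a subtree of the other, so only compositions of right multiplication and compositions of inclusion occur.

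Compositions of right multiplication arise exactly from the $\succ$-relations of $S_1$ and $S_2$ and from every element of $F_1$ and $F_2$. For $s=x_i\succ x_j-\{x_i\succ x_j\}\in S_1$ and $v\in N(X\cup Y)$, the entanglement relation (valid already in the free $L$-algebra) gives $s\prec v=x_i\succ(x_j\prec v)-\{x_i\succ x_j\}\prec v$: if $v\in Irr(S_1\cup S_2)\setminus X$ this is literally $f_{1(ij)}$ with $n=0$; if $v\in X$ it reduces, using $S_1$ and the fact that the structure constants of $L_1$ satisfy the entanglement relation (as $X$ is a $k$-basis of $L_1$), to a $k$-combination of normal $S_1$-words with leading word $\le\overline{s\prec v}$; if $v\notin Irr(S_1\cup S_2)$ one first rewrites $v$ by $S_1\cup S_2$ and closes by induction on $\overline{s\prec v}$. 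For $f_{1(ij)}\in F_1$, the trailing factors $v_1,\dots,v_n$ are designed precisely so that $f_{1(ij)}\prec v$ is again an element of $F_1$, with one further trailing factor $v_{n+1}=v$, whenever $v\in Irr(S_1\cup S_2)$ (in particular for $v\in X$), and reduces after rewriting $v$ otherwise. The cases of $S_2$ and $F_2$ are symmetric, and afterwards Lemma \ref{le0} is available.

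For inclusion compositions $(f,g)_w$, with $\overline g$ a subtree of $w=\overline f$: the constraints $u\in Irr(S_1\cup S_2)\setminus X$ and $v_l\in Irr(S_1\cup S_2)$ in the definition of $F_1,F_2$ are exactly what forbids a leading word of $S_1\cup S_2$ from occurring inside a leading word of $F_1\cup F_2$ — the spine letters $x_i\succ\,\cdot$, $x_j\prec\,\cdot$, $\cdot\prec v_l$ never create a forbidden subword once $u\notin X$ — while a leading word of $F_1\cup F_2$ is too long to sit inside a leading word of $S_1\cup S_2$. Compositions internal to $S_1$ (resp.\ $S_2$) are trivial since $S_1$ (resp.\ $S_2$) is a Gr\"obner--Shirshov basis in $L(X)$ (resp.\ $L(Y)$) by Lemma \ref{eml}, and $\overline{S_1},\overline{S_2}$ involve disjoint variables, so $S_1$ and $S_2$ have no joint composition. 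Thus the compositions still to be checked are those between two elements of $F_1\cup F_2$, where $\overline g$ lies in the slot $u$ or in some slot $v_l$ of $w$. Writing $w=p|_{\overline g}$, expanding $(f,g)_w=f-p|_g$, and substituting the relation $g$ into that slot of $f$, the outcome splits into a $k$-combination of normal $g$-words with leading word $<w$ and a $k$-combination of $F_1$- or $F_2$-shaped expressions whose slot entry is a normal word strictly below $u$; those with slot entry in $Irr(S_1\cup S_2)\setminus X$ are genuine elements of $F_1$ or $F_2$, while those with slot entry in $X$ or outside $Irr(S_1\cup S_2)$ are reduced via the entanglement relation and $S_1\cup S_2$ and then Lemma \ref{le0}, all with leading word $<w$; the argument closes by induction on the ambiguity $w$.

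The main obstacle is this last family of $F$--$F$ inclusion compositions. The computation for one pair is short, but two points need care: replacing a slot entry by a smaller normal word can momentarily produce an $F$-shaped expression whose slot entry is not admissible (a single letter, or a word reducible by $S_1\cup S_2$), which is why the induction on $w$ and Lemma \ref{le0} are needed instead of an immediate identification with an element of $S$; and one must check that every normal-form rewriting of the intermediate $\Omega$-words keeps all leading words strictly below $w$, which uses that normalization does not raise the ordering (\ref{1}) together with the fact that (\ref{1}) is monomial on $N$. Arranging the bookkeeping so that the induction on $w$ goes through is the substance of the proof.
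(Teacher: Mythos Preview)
Your proposal is correct and takes the same route as the paper's proof, which simply lists the possible compositions (only $F$--$F$ inclusions, with $\overline{f'}$ sitting inside some $u$ or $v_l$, together with the indicated right multiplications) and declares them trivial by referring back to the analogous checks in Theorems~\ref{em2} and~\ref{GSBl}. One simplification: the induction on $w$ you set up for the $F$--$F$ inclusion case is not really needed, since after the substitution the residual ``$F$-shaped'' expression $f_1(\pi|_r,v_1,\dots,v_n)$ is already an $s$-word for $s=x_i\succ x_j-\{x_i\succ x_j\}\in S_1$ (it equals $(((s\prec \pi|_r)\prec v_1)\cdots)\prec v_n$ in $L(X\cup Y)$), so Lemma~\ref{le0} disposes of it directly without any case split on the slot entry.
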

\begin{proof}All the possible ambiguities $w$ of composition of inclusion in $S$
is $\overline{f_{1(ij)}}|_{\overline{f'_{1(i'j')}}}$,
$\overline{f_{2(ij)}}|_{\overline{f'_{1(i'j')}}}$,
$\overline{f_{1(ij)}}|_{\overline{f'_{2(i'j')}}}$ and
$\overline{f_{2(ij)}}|_{\overline{f'_{2(i'j')}}}$, where
$\overline{f'_{1(i'j')}}$ ($\overline{f'_{1(i'j')}}$,
$\overline{f'_{2(i'j')}}$, $\overline{f'_{2(i'j')}}$ respectively)
is a subword of $u$ or some $v_r$ in $\overline{f_{1(ij)}}$
($\overline{f_{2(ij)}}$, $\overline{f_{1(ij)}}$,
$\overline{f_{2(ij)}}$ respectively).

All the possible compositions of  right multiplication are:
$(x_i\succ x_j-\{x_i\succ x_j\})\prec u$, $(y_i\succ y_j-\{y_i\succ
y_j\})\prec u$, $f_{1(ij)}\prec u$ and $f_{2(ij)}\prec u$ where
$u\in N(X\cup Y)$.

All the compositions are trivial by a similar analysis in Theorem
\ref{em2}.
 \ \hfill \end{proof}

For any $u\in N(X\cup Y)$, we have $u\in Irr(S)$ if and only if  $u$
is one of the following:
\begin{eqnarray*}
&\Rmnum{1})& u\in X\cup Y, \\
&\Rmnum{2})& u\in(X\prec Y)\cup (X\succ Y)\cup (Y\prec X)\cup(
Y\succ X), \mbox{ where, for example, }\\
&&\ \ \  X\prec Y=\{x\prec y|x\in X,\ y\in Y\},\\
&\Rmnum{3})& \mbox{for }|u|>2\mbox{, there are four cases:}\\
&&1)\ u=x_i\succ w\mbox{, with }w\neq ((x_j\prec
w_1)\prec\cdots)\prec
w_n\mbox{, where }x_i,x_j\in X,\ w,w_t\in Irr(S),\\
&&2)\ u=y_i\succ w\mbox{, with }w\neq ((y_j\prec
w_1)\prec\cdots)\prec
w_n\mbox{, where }y_i,y_j\in Y,\ w,w_t\in Irr(S),\\
&&3)\ u=v\succ w\mbox{, with }|v|\geqslant 2\mbox{, where }v,w\in Irr(S),\\
&&4)\ u=v\prec w\mbox{, with }v\neq v_1\succ v_2\mbox{, where
}v,w,v_1,v_2\in Irr(S).
\end{eqnarray*}
\begin{corollary}
By Theorem \ref{cd}, $Irr(S)$ is a $k$-basis of $L_1*L_2$.
\end{corollary}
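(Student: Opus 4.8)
The plan is to obtain the corollary as an immediate consequence of the Composition--Diamond lemma for $L$-algebras. By Theorem \ref{t10}, the set $S=S_1\cup S_2\cup F_1\cup F_2$ is a Gr\"{o}bner--Shirshov basis in $L(X\cup Y)$, so hypothesis \Rmnum{1}) of Theorem \ref{cd} is satisfied (with $X$ there replaced by $X\cup Y$). The implication \Rmnum{1})$\Rightarrow$\Rmnum{3}) of that theorem then tells me at once that $Irr(S)$ is a $k$-basis of the quotient $L$-algebra $L(X\cup Y\,|\,S)=L(X\cup Y)/Id(S)$. The whole task therefore reduces to identifying this quotient with $L_1*L_2$.

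Recall that $L_1*L_2=L(X\cup Y\,|\,S_1\cup S_2)=L(X\cup Y)/Id(S_1\cup S_2)$, so I must check the ideal equality $Id(S)=Id(S_1\cup S_2)$ inside $L(X\cup Y)$. The inclusion $Id(S_1\cup S_2)\subseteq Id(S)$ is clear since $S_1\cup S_2\subseteq S$, so it remains to prove $F_1\cup F_2\subseteq Id(S_1\cup S_2)$. I would verify this for a generator $f_{1(ij)}\in F_1$, the argument for $F_2$ being identical. Because $L(X\cup Y)$ is a free $L$-algebra, the entanglement relation (\ref{0}) holds there identically; applying it repeatedly to move $x_i\succ$ past each $\prec$ gives, in $L(X\cup Y)$,
$$
x_i\succ((((x_j\prec u)\prec v_1)\prec\cdots)\prec v_n)=(((( x_i\succ x_j)\prec u)\prec v_1)\prec\cdots)\prec v_n.
$$
Substituting this into the definition of $f_{1(ij)}$ shows $f_{1(ij)}=u'|_s$, where $s=x_i\succ x_j-\{x_i\succ x_j\}\in S_1$ and $u'=(((\star\prec u)\prec v_1)\prec\cdots)\prec v_n$ is a $\star$-$\Omega$-word (here one uses that $\{x_i\succ x_j\}$ is a linear combination of elements of $X$, so substituting it for $\star$ reproduces exactly the second summand of $f_{1(ij)}$). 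Since $Id(S_1)$ is an ideal, it is closed under $u'|_{(\cdot)}$, whence $f_{1(ij)}\in Id(S_1)\subseteq Id(S_1\cup S_2)$.

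Having established $Id(S)=Id(S_1\cup S_2)$, I conclude $L(X\cup Y)/Id(S)=L_1*L_2$, so the basis $Irr(S)$ furnished by Theorem \ref{cd} is a $k$-basis of $L_1*L_2$, as claimed. Beyond invoking Theorems \ref{t10} and \ref{cd}, the only genuine content is the ideal equality $Id(S)=Id(S_1\cup S_2)$; the mild point to be careful about is the bookkeeping in the iterated use of (\ref{0}) along the chain of $\prec$-products, but since (\ref{0}) is the single defining identity of an $L$-algebra and holds exactly in the free object $L(X\cup Y)$, each rewriting step is forced and no real obstacle arises.
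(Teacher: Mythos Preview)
Your proposal is correct and follows the same route as the paper: apply the implication \Rmnum{1})$\Rightarrow$\Rmnum{3}) of Theorem~\ref{cd} to the Gr\"obner--Shirshov basis $S$ furnished by Theorem~\ref{t10}. The paper states the corollary without proof, treating the identification $L(X\cup Y\,|\,S)=L_1*L_2$ as implicit in the phrase ``Gr\"obner--Shirshov basis \emph{of} $L_1*L_2$'' in Theorem~\ref{t10}; your explicit verification of $Id(S)=Id(S_1\cup S_2)$ via iterated use of the entanglement relation~(\ref{0}) simply makes this implicit step precise, and the computation you give is accurate.
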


\subsection{Gr\"{o}bner-Shirshov basis for a free
dialgebra}

\begin{definition}\label{l2}
Let $k$ be a field. A $k$-linear space $D$ equipped with two
bilinear multiplications $\succ $  and $\prec $  is called a
dialgebra, if both $\succ$ and $\prec$ are associative and
\begin{eqnarray*}
a\prec(b\succ c)&=&a\prec b\prec c \\
(a\prec b)\succ c&=&a\succ b\succ c \\
a\succ(b\prec c)&=&(a\succ b)\prec c
\end{eqnarray*}
for any $a, \ b, \ c\in D$.
\end{definition}

Let $D(X)$ be the free dialgebra generated by $X$. Then it is clear
that $D(X)$ is also an $L$-algebra and $D(X)=L(X|S)$, where $S$
consists of
\begin{eqnarray*}
&&F_1=\{a\prec (b\prec c)-a\prec(b\succ c)\ |\ a,b,c\in N(X)\},\\
&&F_2=\{(a\prec b)\succ c-a\succ (b\succ c)\ |\ a,b,c\in N(X)\},\\
&&F_3=\{(a\prec b)\prec c-a\prec (b\succ c)\ |\ a,b,c\in N(X)\},\\
&&F_4=\{(a\succ b)\succ c - a\succ (b\succ c)\ |\ a,b,c\in N(X)\}.
\end{eqnarray*}
Denote by
\begin{eqnarray*}
F_5&=&\{f_{5(n)}=a_1\prec(a_2\succ(a_3\succ \cdots
\succ(a_{n+2}\prec a_{n+3})\cdots))-a_1\prec(a_2\succ(a_3\succ
\cdots \succ(a_{n+2}\\&&\succ a_{n+3})\cdots))\ |\ n\in
\mathbb{N}^+,\ a_i\in N(X),\ i=1,2,\dots,n+3\}.
\end{eqnarray*}

Equipping with the above concepts, we have the following theorem.

\begin{theorem}\label{t5.6}
Let $X$ be a well-ordered set. With the ordering (\ref{1}) on
$N(X)$, $S_1=S\cup F_5$ is a Gr\"{o}bner-Shirshov basis in $L(X)$.
\end{theorem}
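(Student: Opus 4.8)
The plan is to invoke the Composition--Diamond lemma for $L$-algebras (Theorem \ref{cd}): since $D(X)=L(X|S_1)$, it suffices to show that every composition of polynomials in $S_1=S\cup F_5$ is trivial --- every composition of right multiplication trivial $mod(S_1)$, every composition of inclusion trivial $mod(S_1,w)$.

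First I would pin down the leading $\Omega$-words. Because $\succ=\delta_1<\delta_2=\prec$, at any fixed position a $\prec$ outweighs a $\succ$ in the ordering (\ref{1}), so the leading monomial of an $F_i$-polynomial should be the one carrying the ``extra'' $\prec$. The subtlety is that when the distinguished argument (the factor $b$ in $F_1$, the left factor $a$ in $F_2,F_3,F_4$, the head $a_1$ of the chain in $F_5$) is itself $\succ$-headed, the defining polynomial must first be rewritten to normal form via the rule for $[u\prec v]$ recorded after Proposition \ref{p1}, after which the leading monomial may switch --- e.g. $f_{1(a_1,\,a_2\succ a_3,\,a_4)}$ has leading $\Omega$-word $[a_1\prec((a_2\succ a_3)\succ a_4)]$, not the $\prec\prec$-monomial, and is monic only after multiplying by $-1$. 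So the first task is to record, for every $S_1$-polynomial, its monic normal form and its leading $\Omega$-word, and in particular to identify which leading words are $\succ$-headed: all of $F_2$ and $F_4$, and those members of $F_1,F_3,F_5$ whose relevant argument is $\succ$-headed; these are the ones giving compositions of right multiplication.

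Second, the compositions of right multiplication. For $f\in F_4$ with $\overline f=(a\succ b)\succ c$, the $[u\prec v]$-rule gives $[\,\overline f\prec v\,]=(a\succ b)\succ[c\prec v]$ and $[\,(a\succ(b\succ c))\prec v\,]=a\succ(b\succ[c\prec v])$, so $f\prec v$ is again a member of $F_4$ and hence $\equiv 0\ mod(S_1)$; the same works for $F_2$, using in addition the entanglement relation (\ref{0}). For the $\succ$-headed members of $F_1,F_3$ the outer $\succ$-head factors out, reducing the check to a shorter instance plus one application of $F_5$, and for $F_5$ the operation $[\,\cdot\prec v\,]$ merely lengthens the $\succ$-chain at its innermost slot, so $f_{5(n)}\prec v$ collapses into the same family; I would run the latter two as a single induction on chain length.

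Third --- and here is where the work lies --- the compositions of inclusion. The decisive overlap is $F_1$ inside $F_4$: in $\overline{f_{1(a_1,\,a_2\succ a_3,\,a_4)}}=a_1\prec((a_2\succ a_3)\succ a_4)$ the subword $(a_2\succ a_3)\succ a_4=\overline{f_{4(a_2,a_3,a_4)}}$ occurs, and resolving this diamond leaves, up to sign, the polynomial $a_1\prec(a_2\succ(a_3\prec a_4))-a_1\prec(a_2\succ(a_3\succ a_4))=f_{5(1)}$ --- precisely why $S$ alone is not a Gr\"{o}bner--Shirshov basis, and why $F_5$ is adjoined; once it is, the composition is trivial $mod(S_1,w)$ since $\overline{f_{5(1)}}<w$. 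Carrying out the same computation with a $\succ$-chain of length $n$ under the top $\prec$ produces $f_{5(n)}$. The remaining inclusions --- $F_1$ and $F_3$ nested as left arguments of $\prec$, $F_5$ overlapping itself, $F_2$ and $F_4$ occurring inside an $F_5$-word --- all close by a bounded rewriting using $F_1,\dots,F_4$, relation (\ref{0}) and $F_5$, the only new relations that ever appear being members of $F_5$; one verifies once that $F_5$ is already closed under forming such compositions. The real obstacle throughout is bookkeeping: $a,b,c$ and the $a_i$ range over \emph{all} normal words, so each ``leading word'' may need re-normalization and the literal generators are not monic in the $\succ$-headed cases; and the inclusion overlaps come in an $\mathbb N$-indexed family graded by the length of the $\succ$-chain below the outer $\prec$. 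The clean route is to notice that $F_5$ already packages that whole chain, so each such infinite family is dispatched by a single chain-induction rather than an unbounded case analysis. Once all compositions are shown trivial, Theorem \ref{cd} yields that $S_1$ is a Gr\"{o}bner--Shirshov basis in $L(X)$ (and, via its part III, the normal form of $D(X)$).
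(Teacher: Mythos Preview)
Your plan and the paper's proof follow the same architecture --- verify that every right-multiplication composition and every inclusion composition is trivial, then invoke Theorem \ref{cd} --- but they diverge in how the normalization of the defining polynomials is handled.

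You devote your first step to tracking how the leading word of each $f\in S_1$ changes when a parameter $a,b,c,a_i\in N(X)$ is itself $\succ$-headed, and your ``decisive overlap'' --- $\overline{f_4}$ sitting inside the normalized $\overline{f_{1(a,\,b_1\succ b_2,\,c)}}=a\prec((b_1\succ b_2)\succ c)$ --- is an artifact of that normalization. The paper sidesteps this entirely: it treats $\overline{f_1}=a\prec(b\prec c)$, $\overline{f_2}=(a\prec b)\succ c$, etc., as the leading words throughout, so the only $\succ$-headed leading words are those from $F_2$ and $F_4$, and those are the only right-multiplication compositions checked. In effect the paper works with the sub-collection $S_1'\subseteq S_1$ in which the parameters are restricted so that both defining $\Omega$-words are already normal; one has $Id(S_1')=Id(S_1)$, and every leading word of a polynomial in $S_1\setminus S_1'$ already contains some leading word from $S_1'$ (your own computation $\overline{-f_{1(a,\,b_1\succ b_2,\,c)}}\supset\overline{f_4}$ is the prototype), whence $Irr(S_1)=Irr(S_1')$ and the two assertions ``$S_1$ is a GSB'' and ``$S_1'$ is a GSB'' are equivalent via Theorem \ref{cd}. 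Your route is honest about this point but pays for it with a heavier case analysis; the paper's route is cleaner but leaves that reduction implicit.

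Where the paper does substantially more than your outline is in the enumeration of inclusion ambiguities: it lists sixteen explicit overlap patterns (self-overlaps of $F_1$, $F_3$, $F_4$, $F_5$; the various ways $\overline{f_i}$ can sit as the top operator of a sub-expression of $\overline{f_j}$) and verifies representatives. In particular the paper's discovery of $F_5$ comes not from your $F_4$-inside-$F_1$ overlap but from case 1), the self-overlap $w=a\prec(b\prec(c\prec d))$ of $F_1$ with itself, whose resolution yields exactly $f_{5(1)}$; the higher $f_{5(n)}$ appear in cases 9) and 16). Your sketch (``$F_1$ and $F_3$ nested as left arguments of $\prec$, $F_5$ overlapping itself, $F_2$ and $F_4$ inside an $F_5$-word'') does not yet cover that full list, and a complete proof along your lines would still need an enumeration of comparable size. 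One small inaccuracy: when the \emph{first} argument $a$ is $\succ$-headed in $F_1$ or $F_3$, the polynomial is literally $a_1\succ f'$ for a shorter $f'$ in the same family, so its right-multiplication composition is already a single normal $s$-word and no appeal to $F_5$ is needed there.
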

\begin{proof}
The possible compositions of  right multiplication are: $f\prec u$,
where $f\in F_2 \cup F_4$, $u\in N(X)$. If $f\in F_2$, we have
\begin{eqnarray*}
~~f\prec u &=&((a\prec b)\succ c)\prec u-(a\succ (b\succ c))\prec u\\
&\equiv&(a\prec b)\succ (c\prec u)-a\succ((b\succ c)\prec u))\\
&\equiv&a\succ (b\succ (c\prec u))-a\succ (b\succ (c\prec u))\\
&\equiv& 0 \ \ mod(S).
\end{eqnarray*}
Similarly, $f\prec u\equiv 0 \ \ mod(S)$ where $f\in F_4$.

For compositions of inclusion, if some $\overline{f_i}$ is a subword
of $a$ ($b$, $c$, $a_i$ respectively) in some $\overline{f_j}$, the
composition $(f_j,f_i)_{\overline{f_j}}$ is  trivial by a similar
proof in Theorem \ref{GSBl}.

All the other ambiguities $w$ of compositions of inclusion in $S_1$
are:
\begin{enumerate}
\item[1)]\ $w=a\prec (b\prec (c\prec
d))=a\prec\overline{f_1}=\overline{f'_1},\mbox{ where }f_1,f'_1\in
F_1,$

\item[2)]\ $w=(a\prec (b\prec c))\succ d=\overline{f_1}\succ
d=\overline{f_2},\mbox{ where }f_1\in F_1,\ f_2\in F_2,$

\item[3)]\ $w=a\prec ((b\prec c)\prec d)=a\prec
\overline{f_3}=\overline{f_1},\mbox{ where }f_1\in F_1,\ f_3\in
F_3,$

\item[4)]\ $w=(a\prec (b\prec c))\prec d=\overline{f_1}\prec
d=\overline{f_3},\mbox{ where }f_1\in F_1,\ f_3\in F_3,$

\item[5)]\ $w=((a\prec b)\prec c)\succ d=\overline{f_3}\succ
d=\overline{f_2},\mbox{ where }f_2\in F_2,\ f_3\in F_3,$

\item[6)]\ $w=((a\prec b)\succ c)\succ d=\overline{f_2}\succ
d=\overline{f_4},\mbox{ where }f_2\in F_2,\ f_4\in F_4,$

\item[7)]\ $w=((a\prec b)\prec c)\prec d=\overline{f_3}\prec
d=\overline{f'_3},\mbox{ where }f_3,f'_3\in F_3,$

\item[8)]\ $w=((a\succ b)\succ c)\succ d=\overline{f_4}\succ
d=\overline{f'_4},\mbox{ where }f_4,f'_4\in F_4,$

\item[9)]\ $w=a_1\prec(a_2\succ(a_3\succ \cdots \succ(a_{i+2}\prec
(a_{i+3}\prec a_{i+4}))\cdots))=a_1\prec(a_2\succ(a_3\succ \cdots
\succ(\overline{f_1})\cdots))=\overline{f_{5(i)}},\mbox{ where
}f_1\in F_1,\ f_{5(i)}\in F_5,$

\item[10)]\ $w=a\prec(a_1\prec(a_2\succ(a_3\succ \cdots \succ(a_{i+2}\prec
a_{i+3})\cdots)))=a\prec\overline{f_{5(i)}}=\overline{f_1},\mbox{
where }f_1\in F_1,\ f_{5(i)}\in F_5,$

\item[11)]\  $w=(a_1\prec(a_2\succ(a_3\succ \cdots \succ(a_{i+2}\prec
a_{i+3})\cdots)))\succ c=\overline{f_{5(i)}}\succ
c=\overline{f_2},\mbox{ where }f_2\in F_2,\ f_{5(i)}\in F_5,$

\item[12)]\  $w=a_1\prec(a_2\succ(a_3\succ \cdots \succ((a\prec b)\succ(a_{i+2}\prec
a_{i+3})\cdots)))=a_1\prec(a_2\succ(a_3\succ \cdots
\succ(\overline{f_2}\prec
a_{i+3})\cdots))=\overline{f_{5(i)}},\mbox{ where }f_2\in F_2,\
f_{5(i)}\in F_5,$

\item[13)]\ $w=(a_1\prec(a_2\succ(a_3\succ \cdots \succ(a_{i+2}\prec
a_{i+3})\cdots)))\prec c=\overline{f_{5(i)}}\prec
c=\overline{f_3},\mbox{ where }f_3\in F_3,\ f_{5(i)}\in F_5,$

\item[14)]\ $w=a_1\prec(a_2\succ(a_3\succ \cdots \succ((a_{i+2}\prec
a_{i+3})\prec a_{i+4})\cdots))=a_1\prec(a_2\succ(a_3\succ \cdots
\succ\overline{f_3})\cdots)=\overline{f_{5(i)}},\mbox{ where }f_3\in
F_3,\ f_{5(i)}\in F_5,$

\item[15)]\ $w=a_1\prec(a_2\succ(a_3\succ \cdots \succ((a\succ b)\succ(a_{i+2}\prec
a_{i+3})\cdots)))=a_1\prec(a_2\succ(a_3\succ \cdots
\succ(\overline{f_4}\prec
a_{i+3})\cdots))=\overline{f_{5(i)}},\mbox{ where }f_4\in F_4,\
f_{5(i)}\in F_5,$

\item[16)]\ $w=a_1\prec(a_2\succ \cdots \succ(a_{i+1}\succ(b_1\prec(b_2\succ \cdots \succ(b_{j+2}\prec
b_{j+3})\cdots))\cdots))=a_1\prec(a_2\succ \cdots \succ(a_{i+1}\succ
\overline{f_{5(j)}})\cdots)=\overline{f_{5(i)}},\mbox{ where
}f_{5(i)}, f_{5(j)}\in F_5$.
\end{enumerate}

These compositions are all trivial. Here, for example, we just check
1), 9) and 16). Others are similarly proved.

1) Let $w=a\prec (b\prec (c\prec
d))=a\prec\overline{f_1}=\overline{f'_1}$. We have
\begin{eqnarray*}
~~(f'_1,f_1)_{w} &\equiv&a\prec (b\succ (c\prec d))-a\prec (b\prec
(c\succ d))\\ &\equiv&a\prec (b\succ (c\prec d))-a\prec (b\succ
(c\succ d))\\&\equiv&f_{5(1)}\\&\equiv& 0\ \ mod(S_1,w).
\end{eqnarray*}

9) Let $w=a_1\prec(a_2\succ(a_3\succ \cdots \succ(a_{i+2}\prec
(a_{i+3}\prec a_{i+4}))\cdots))=a_1\prec(a_2\succ(a_3\succ \cdots
\succ(\overline{f_1})\cdots))=\overline{f_{5(i)}}$. We have
\begin{eqnarray*}
(f_1,f_{5(i)})_{w} &\equiv& a_1\prec(a_2\succ(a_3\succ \cdots
\succ(a_{i+2}\succ (a_{i+3}\prec
a_{i+4}))\cdots))\\&~~&-a_1\prec(a_2\succ(a_3\succ \cdots
\succ(a_{i+2}\prec (a_{i+3}\succ
a_{i+4}))\cdots))\\
&\equiv&a_1\prec(a_2\succ(a_3\succ \cdots \succ(a_{i+2}\succ
(a_{i+3}\prec a_{i+4}))\cdots))\\ &~~&-a_1\prec(a_2\succ(a_3\succ
\cdots \succ(a_{i+2}\succ
(a_{i+3}\succ a_{i+4}))\cdots))\\
&\equiv& f_{5(i+1)}\\ &\equiv& 0\ \ mod(S_1,w).
\end{eqnarray*}

16) Let $w=a_1\prec(a_2\succ \cdots
\succ(a_{i+1}\succ(b_1\prec(b_2\succ \cdots \succ(b_{j+2}\prec
b_{j+3})\cdots))\cdots))=a_1\prec(a_2\succ \cdots \succ(a_{i+1}\succ
\overline{f_{5(j)}})\cdots)=\overline{f_{5(i)}}$. We have
\begin{eqnarray*}
(f_{5(i)},f_{5(j)})_{w} &\equiv& a_1\prec(a_2\succ \cdots
\succ(a_{i+1}\succ(b_1\succ(b_2\succ \cdots \succ(b_{j+2}\prec
b_{j+3})\cdots))\cdots))\\&~~&-a_1\prec(a_2\succ \cdots
\succ(a_{i+1}\succ(b_1\prec(b_2\succ \cdots \succ(b_{j+2}\succ
b_{j+3})\cdots))\cdots))\\
&\equiv& 0\ \ mod(S_1,w).
\end{eqnarray*}

Then $S_1$ is a Gr\"{o}bner-Shirshov basis in $L(X)$.
 \ \hfill \end{proof}

 \ \

The following corollary follows from Theorems \ref{cd} and
\ref{t5.6}.
\begin{corollary}
The set $Irr (S_1) =\{ u=x_{-m}\succ(x_{-m+1}\succ \cdots
\succ(x_{0}\prec (x_{1}\succ \cdots \succ(x_{n-1}\succ
x_{n})\cdots)))|\ n,m\in \mathbb{N}, x_i\in X \}$ is a $k$-basis of
the free dialgebra $D(X)=L(X|S)$.
\end{corollary}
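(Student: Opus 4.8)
The plan is to derive the corollary from Theorem \ref{t5.6} and Theorem \ref{cd}, the only real work being the explicit determination of $Irr(S_1)$. First I would note that each $f_{5(n)}\in F_5$ lies in $Id(S)$: applying the entanglement relation (\ref{0}) repeatedly moves the innermost $\prec$ past all the $\succ$'s, and then the relations in $F_1$ and $F_4$ (together with $\succ$-associativity, which follows from $F_4$) convert the resulting term into $a_1\prec(a_2\succ(a_3\succ\cdots\succ(a_{n+2}\succ a_{n+3})\cdots))$, so that $f_{5(n)}=0$ in $D(X)$. Hence $Id(S_1)=Id(S)$ and $L(X|S_1)=L(X|S)=D(X)$. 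By Theorem \ref{t5.6}, $S_1$ is a Gr\"obner-Shirshov basis in $L(X)$, so Theorem \ref{cd} gives that $Irr(S_1)$ is a $k$-basis of $D(X)$; it remains to show that $Irr(S_1)$ coincides with the set in the statement.

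Next I would record the leading $\Omega$-words relative to the ordering (\ref{1}). Since $\succ=\delta_1<\delta_2=\prec$, one gets $\overline{f_1}=a\prec(b\prec c)$, $\overline{f_2}=(a\prec b)\succ c$, $\overline{f_3}=(a\prec b)\prec c$, $\overline{f_4}=(a\succ b)\succ c$, and $\overline{f_{5(n)}}=a_1\prec(a_2\succ(a_3\succ\cdots\succ(a_{n+2}\prec a_{n+3})\cdots))$, where in the last case the inner $\prec$ is retained because $\prec>\succ$. From the definition of $Irr(S_1)$ together with the description of normal words following Theorem \ref{GSBl}, a normal word $u\in N(X)$ lies in $Irr(S_1)$ if and only if no subterm of $u$ has one of these five shapes.

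The heart of the argument is then a structural analysis of such $u$, carried out by induction on $|u|$ and by inspecting the outermost operation. Forbidding $\overline{f_2}$ and $\overline{f_4}$ forces the left argument of every $\succ$-subterm to be a single variable; forbidding $\overline{f_3}$, together with normality, forces the left argument of every $\prec$-subterm to be a single variable; and forbidding $\overline{f_1}$ together with all the $\overline{f_{5(n)}}$, $n\geq1$, forces the right argument $w$ of every $\prec$-subterm to be a word whose ``$\succ$-spine endpoint'' (the word obtained from $w$ by repeatedly passing to the right argument as long as the top operation is $\succ$) is a single variable, not a $\prec$-product. Putting these restrictions into the recursive shape of a normal word, one sees that a $\prec$ can occur in $u$ only along the right spine and only once, hence that $u$ is a chain $x_{-m}\succ(x_{-m+1}\succ\cdots\succ x_{0})$ of $\succ$'s of variables when it contains no $\prec$, and otherwise has the form $x_{-m}\succ(x_{-m+1}\succ\cdots\succ(x_{0}\prec(x_{1}\succ\cdots\succ x_{n}))\cdots)$; conversely, every such word is normal and contains no $\overline{f}$, $f\in S_1$. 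This is exactly the displayed description of $Irr(S_1)$, and the corollary follows.

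I expect the third step to be the main obstacle, and within it the handling of the infinite family $\overline{f_{5(n)}}$: one must argue that excluding $\overline{f_1}$ and all $\overline{f_{5(n)}}$ simultaneously is equivalent to the single clean condition ``no $\prec$ occurs strictly inside the right subtree of a $\prec$'', so that at most one $\prec$ survives in any element of $Irr(S_1)$ and it sits precisely at the junction between the leading and trailing $\succ$-chains. The leading-word computation and the converse inclusion are then routine.
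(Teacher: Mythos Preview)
Your proposal is correct and follows exactly the route the paper indicates: the paper's entire proof is the sentence ``The following corollary follows from Theorems~\ref{cd} and~\ref{t5.6},'' and you carry this out, supplying the two details the paper leaves implicit, namely that $F_5\subset Id(S)$ (so $L(X|S_1)=D(X)$) and the explicit identification of $Irr(S_1)$ via the leading words of $F_1$--$F_4$ and $F_5$. Your structural analysis of which normal words avoid these leading terms is accurate, including the key observation that excluding $\overline{f_1}$ together with all $\overline{f_{5(n)}}$ forces at most one $\prec$, located on the right spine.
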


\noindent \textbf{Remark}: \ In \cite{Lo99}, Loday gives a $k$-basis
of the free dialgebra $D(X)$, which is $\{x_{-m}\succ \cdots \succ
x_{-1} \succ x_0 \prec x_1 \prec \cdots \prec x_n|n,m\in \mathbb{N},
x_i\in X\}$. It is easy to see that our $k$-basis is just the same
as in  \cite{Lo99} by using relation $a\prec (b\prec
c)=a\prec(b\succ c)$.

\subsection{Embedding theorems for $L$-algebras}

 \begin{theorem}\label{em2}
Every countably generated $L$-algebra over a field $k$ can be
embedded into a two-generated $L$-algebra.
\end{theorem}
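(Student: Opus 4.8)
The plan is to embed $A$ into a two-generated $L$-algebra $B$ obtained by adjoining two new generators $a,b$, forcing each old generator $x_i$ to equal a fixed normal word $u_i$ in $\{a,b\}$, and then adding the ``saturation'' relations needed to turn the defining set into a Gr\"{o}bner-Shirshov basis; the embedded copy of $A$ will be the subalgebra of $B$ spanned by the old alphabet. Concretely, by Lemma \ref{eml} write $A=L(X|S)$ with $X=\{x_i|i\geq1\}$ a $k$-basis and $S=\{x_i\prec x_j-\{x_i\prec x_j\},\ x_i\succ x_j-\{x_i\succ x_j\}|i,j\geq1\}$ a Gr\"{o}bner-Shirshov basis in $L(X)$; since every normal word over $X$ of length $\geq2$ has a length-$2$ subword (necessarily of the form $x_p\prec x_q$ or $x_p\succ x_q$), we have $Irr(S)=X$. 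Set $Y=X\cup\{a,b\}$ with $a<b<x_1<x_2<\cdots$, and for each $i$ fix a normal word $u_i\in N(\{a,b\})$ of $\prec$-form with $|u_i|\geq3$, for instance
$$
u_i=(a\prec b)\prec\Big(\,\underbrace{a\succ(a\succ(\cdots\succ a))}_{i}\,\Big),
$$
chosen so that no $u_i$ is a subword of $u_j$ for $i\neq j$. Since $|u_i|\geq3>1=|x_i|$, the ordering (\ref{1}) gives $\overline{x_i-u_i}=u_i$.

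Next, form in $L(Y)$ the set $\hat S=S\cup\{x_i-u_i|i\geq1\}\cup F$, where $F$ is the analogue of the set $F_1$ of Theorem \ref{t10}: it consists of all $\Omega$-polynomials
$$
x_i\succ((((x_j\prec v_0)\prec v_1)\prec\cdots)\prec v_n)-(((\{x_i\succ x_j\}\prec v_0)\prec v_1)\prec\cdots)\prec v_n
$$
with $n\geq0$, where $v_0\notin X$ and $v_0,v_1,\dots,v_n$ are irreducible modulo $S\cup\{x_k-u_k|k\geq1\}$. Every element of $F$ lies in $Id(S\cup\{x_i-u_i\})$ — it follows from one $S$-relation together with repeated use of the entanglement identity (\ref{0}) — so the algebra $B:=L(Y|\hat S)$ coincides with $L(Y|S\cup\{x_i-u_i|i\geq1\})$; adjoining $F$ merely completes the defining set to a Gr\"{o}bner-Shirshov basis.

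The heart of the proof is to show that $\hat S$ is a Gr\"{o}bner-Shirshov basis in $L(Y)$. Compositions internal to $S$ are trivial by Lemma \ref{eml}, the point being that $A$ is a genuine $L$-algebra. The one new phenomenon is that the composition of right multiplication $(x_i\succ x_j-\{x_i\succ x_j\})\prec v$ is no longer automatically reducible when $v$ is an irreducible word involving $a$ or $b$; the relations of $F$ are inserted precisely to absorb these. One then checks that $F$ is closed under further right multiplication (producing $F$-relations with one more tail letter $v_{n+1}$) and under all inclusion compositions with $S$ and with the relations $x_i-u_i$, while the choice of the $u_i$ as pairwise non-subwords of $\prec$-form rules out any composition involving the relations $x_i-u_i$ among themselves. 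Each of these verifications is a near-verbatim transcription of the composition analysis already carried out for Theorems \ref{t10} and \ref{t5.6}, and this case-checking is where the real work — and the main obstacle — lies.

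Granting that $\hat S$ is a Gr\"{o}bner-Shirshov basis, observe that every relation of $\hat S$ outside $S$ has a leading word involving $a$ or $b$, so a normal word over the alphabet $X$ is in $Irr(\hat S)$ if and only if it is in $Irr(S)=X$; hence $X\subseteq Irr(\hat S)$. By Theorem \ref{cd}, $Irr(\hat S)$ is a $k$-basis of $B$, so the $x_i$ are $k$-linearly independent in $B$, and the $k$-subspace $kX$ spanned by $X$ is closed under the operations of $B$, with $x_i\prec x_j=\{x_i\prec x_j\}$ and $x_i\succ x_j=\{x_i\succ x_j\}$ holding in $B$. Thus $kX$ is a sub-$L$-algebra of $B$ with multiplication table exactly $S$, hence isomorphic to $A$. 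On the other hand $x_i=u_i$ in $B$ for every $i$, so the subalgebra of $B$ generated by $\{a,b\}$ contains all of $X$ and therefore equals $B$; that is, $B$ is two-generated. This realizes $A$ as a subalgebra of a two-generated $L$-algebra, as required.
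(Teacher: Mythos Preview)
Your proposal is correct and follows essentially the same strategy as the paper's proof. The paper also adjoins two letters $a,b$, adds relations $f_{3(i)}=a\prec(b\prec(\cdots(b\prec b)))-x_i$ expressing each $x_i$ as a fixed $\prec$-form word in $a,b$, and then saturates with the set $F_4$ (your $F$) of right-multiplication consequences $x_i\succ((((x_j\prec u)\prec v_1)\prec\cdots)\prec v_n)-(((\{x_i\succ x_j\}\prec u)\prec v_1)\prec\cdots)\prec v_n$ with $u,v_l$ irreducible; the Gr\"{o}bner-Shirshov verification and the embedding conclusion are argued exactly as you do. The only difference is cosmetic: the paper's words $a\prec(b\prec(\cdots(b\prec b)))$ in place of your $(a\prec b)\prec(a\succ(\cdots\succ a))$, both being pairwise non-subword families of $\prec$-form so that neither inclusion nor right-multiplication compositions arise from them.
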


\begin{proof}
Let $A$ be a countably generated $L$-algebra. We may assume that $A$
has a countable $k$-basis $X = \{x_i|i\in \mathbb{N}^+\}$. By Lemma
\ref{eml}, $A =L(X|S)$, where $S = \{x_i\prec x_j-\{x_i\prec x_j\},\
x_i\succ x_j-\{x_i\succ x_j\}|\ i,\ j \in \mathbb{N}^+\}$.

Let $Y=\{a,b\}$,
\begin{eqnarray*}
F_1&=&\{f_{1(ij)}=x_i\succ x_j-\{x_i\succ x_j\}\ |\ i,j\in\mathbb{N}^+\},\\
F_2&=&\{f_{2(ij)}=x_i\prec x_j-\{x_i\prec x_j\}\ |\ i,j\in\mathbb{N}^+\},\\
F_3&=&\{f_{3(i)}=a\prec\underbrace{(b\prec(\cdots (b\prec b))}_i)-x_i\ |\ i,j\in\mathbb{N}^+\},\\
F_4&=&\{f_{4(ij)}=x_i\succ ((((x_j\prec u)\prec
v_1)\prec\cdots)\prec v_n )-\\
&&\ \ (((\{x_i\succ x_j\}\prec u)\prec v_1)\prec\cdots)\prec v_n\ |\
i,j\in\mathbb{N}^+,\
n\in \mathbb{N},\ u,v_l\in N(X\cup Y),\\
&&\ \ u\in Irr(F_1\cup F_2\cup F_3)-X, v_l\in Irr(F_1\cup F_2\cup
F_3),\ l=1,\dots, n\}.
\end{eqnarray*}

Let $B=L(X\cup Y|S_1)$, where $S_1=F_1\cup F_2\cup F_3\cup F_4$.

We want to prove that $S_1$ is a Gr\"{o}bner-Shirshov basis in
$L(X\cup Y)$ with ordering (\ref{1}), where $X\cup Y$ is a
well-ordered set.

 The only ambiguity
$w$ of composition of inclusion in $S_1$ is
$\overline{f_{4(ij)}}|_{\overline{f'_{4(i'j')}}}$ where
$\overline{f'_{4(i'j')}}$ is a subword of $u$ or some $v_l$. It is
trivial by a similar proof in Theorem \ref{GSBl}.

For right multiplication, only possible compositions of right
multiplication are $f_{m(ij)}\prec u$, $m=1,4$, $u\in N(X\cup Y)$.
We can use $F_1$, $F_2$ and $F_3$ to reduce $u$ into a $v\in
Irr(F_1\cup F_2\cup F_3)$ such that $v\leq u$ and
$$
f_{m(ij)}\prec u\equiv f_{m(ij)}\prec v\ \ mod(S_1).
$$
So $f_{m(ij)}\prec u$ is trivial if $f_{m(ij)}\prec v$ is trivial.
Then we only need to consider right multiplication with $u\in
Irr(F_1\cup F_2\cup F_3)$.

We consider the case of $f_{1(ij)}$ first. For any $u\in Irr(F_1\cup
F_2\cup F_3)$, if $u\notin X$, according to $f_{4(ij)}$,
$$
f_{1(ij)}\prec u=(x_i\succ x_j)\prec u- \{x_i\succ x_j\}\prec
u\equiv 0\ \ mod(S_1).
$$
If $u=x_k\in X$, then
\begin{eqnarray*}
f_{1(ij)}\prec u&=&(x_i\succ x_j)\prec u- \{x_i\succ x_j\}\prec
u\\&=&x_i\succ (x_j\prec x_k)-\{x_i\succ x_j\}\prec
x_k\\&\equiv&x_i\succ \{x_j\prec x_k\}-\{x_i\succ x_j\}\prec
x_k\\&\equiv&\{x_i\succ \{x_j\prec x_k\}\}-\{\{x_i\succ x_j\}\prec
x_k\}\\
&\equiv& 0\ \ mod(S_1).
\end{eqnarray*}

Now we consider the case of $f_{4(ij)}$. For any $w\in Irr(F_1\cup
F_2\cup F_3)$,
\begin{eqnarray*}
f_{4(ij)}\prec w&=&(x_i\succ(((x_j\prec u)\prec\cdots)\prec v_n
))\prec w-(((\{x_i\succ x_j\}\prec u)\prec\cdots)\prec v_n )\prec
w\\&=&x_i\succ((((x_j\prec u)\prec\cdots)\prec v_n )\prec
w)-(((\{x_i\succ x_j\}\prec u)\prec\cdots)\prec v_n
)\prec w\\
&\equiv& 0\ \ mod(S_1).
\end{eqnarray*}
So $S_1$ is a Gr\"{o}bner-Shirshov basis in $L(X\cup Y)$. By Theorem
\ref{cd}, $A$ can be embedded into $B$ which is generated by $Y=\{a,
b\}$.
 \ \hfill \end{proof}

  \begin{theorem}
Every $L$-algebra over a field $k$ can be embedded into a simple
$L$-algebra.
\end{theorem}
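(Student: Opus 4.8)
The plan is to imitate the classical embedding-into-simple construction (due in spirit to Bokut for associative and Lie algebras), now carried out with the Composition-Diamond lemma for $L$-algebras (Theorem \ref{cd}) as the main technical tool. Let $A$ be an $L$-algebra over $k$. By Lemma \ref{eml} we may write $A = L(X\mid S)$ with $X = \{x_i \mid i \in I\}$ a $k$-basis of $A$ and $S$ a Gr\"obner-Shirshov basis consisting of the relations $x_i \prec x_j - \{x_i\prec x_j\}$ and $x_i \succ x_j - \{x_i\succ x_j\}$. The idea is to adjoin two new generators, say $a$ and $b$, together with families of defining relations that force any nonzero element of (the image of) $A$ to generate, via $\prec$, $\succ$ and multiplication by words in $a,b$, a prescribed nonzero element; iterating this over all nonzero elements of the successive algebras and taking a direct limit produces a simple $L$-algebra containing $A$.

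Concretely, I would proceed in steps. First, fix an enumeration (well-ordering) of a $k$-basis of $A$, and for each basis element $v$ introduce relations of the type $a \prec (\cdots(a\prec(w_v \prec b))\cdots) = v_0$ (schematically) that "capture" $v$ and move it to a single fixed element $v_0$; here $w_v$ ranges over suitable words so that an arbitrary monomial involving $v$ can be steered to $v_0$. This is exactly the kind of relation set $F_3, F_4$ used in the proof of Theorem \ref{em2}, enlarged so that every nonzero element — not just basis elements — of $A$ lands on $v_0$. Second, I would verify that the resulting set of relations $S^{(1)} = S \cup (\text{new relations})$, with the ordering (\ref{1}) on the corresponding normal words, is a Gr\"obner-Shirshov basis: this means checking that all compositions of right multiplication and all compositions of inclusion are trivial. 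The compositions of inclusion between the new relations, and between new relations and $S$, are of the same shape as those handled in Theorems \ref{GSBl}, \ref{em2} and \ref{t10}, and the compositions of right multiplication are absorbed by the "absorbing" relations $F_4$-type, so triviality follows by the now-routine reductions. By Theorem \ref{cd} this gives an embedding $A = A_0 \hookrightarrow A_1$, where in $A_1$ every nonzero element of $A_0$ generates a fixed nonzero element.

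Third, I would iterate: set $A_{n+1}$ to be the algebra obtained from $A_n$ by the same construction (adjoining two generators and the absorbing relations for all nonzero elements of $A_n$), so that each $A_n \hookrightarrow A_{n+1}$, and let $\widetilde{A} = \varinjlim A_n = \bigcup_n A_n$. Then $\widetilde{A}$ is an $L$-algebra containing $A$, and it is simple: if $0 \neq c \in I$ for a nonzero ideal $I$ of $\widetilde{A}$, then $c \in A_n$ for some $n$, and by construction the ideal of $A_{n+1}$ generated by $c$ contains a fixed nonzero element $v_0 \in A_0$; similarly $v_0$ generates, one stage later, another fixed target, and chaining these one shows $I$ contains a full basis of each $A_m$, hence $I = \widetilde{A}$. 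The delicate point — and the step I expect to be the main obstacle — is the Gr\"obner-Shirshov verification in the second step: one must choose the absorbing relations carefully enough that (i) every nonzero element really is steered to the chosen target, and (ii) no unintended compositions arise that fail to be trivial, in particular the compositions of right multiplication coming from relations whose leading term is of the form $u_1 \succ u_2$. As in Theorem \ref{em2}, the cure is to include enough "long" relations of $f_{4}$-type so that right multiplication by an arbitrary normal word is reducible to right multiplication by an irreducible word, after which triviality is immediate; making this bookkeeping precise for all nonzero elements simultaneously (rather than just basis elements) is where the real work lies, but it is a direct elaboration of the arguments already given.
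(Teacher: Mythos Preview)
Your high-level strategy (build a tower $A=A_0\hookrightarrow A_1\hookrightarrow\cdots$ using Theorem \ref{cd} at each stage and take the union) matches the paper, but the concrete mechanism you sketch has two genuine gaps. First, adjoining only two new letters $a,b$ cannot work in general: the relations you need are indexed by the nonzero (equivalently, monic) elements of $A$, of which there are $|A|=\max(|k|,\dim A)$, while the supply of distinct normal words in $X\cup\{a,b\}$ available to separate leading terms has cardinality only $\max(|X|,\aleph_0)$. When $|k|$ is large these do not match, so distinct relations will be forced to share leading terms and the inclusion compositions will not be trivial. Second, your simplicity argument is incomplete: steering every nonzero element of $A_n$ to a single $v_0$ only gives one direction; you never arrange that $v_0$ in turn ideal-generates a basis of $A_n$, so ``chaining fixed targets'' yields a sequence of single elements, not the whole algebra.

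The paper fixes both problems at once with a different device: for every ordered pair $(s_\theta,s_\sigma)$ of monic elements of $A$ it introduces a \emph{fresh} pair of letters $x_{\theta\sigma},y_{\theta\sigma}$ and the single relation
\[
x_{\theta\sigma}\prec(s_\theta\succ y_{\theta\sigma})-s_\sigma.
\]
The fresh outer letter $x_{\theta\sigma}$ makes all leading terms distinct regardless of $\overline{s_\theta}$, so the only compositions to check are the routine right-multiplication ones handled by the $F_4$-type family, exactly as in Theorem \ref{em2}. Moreover the relation directly says that $s_\sigma$ lies in the ideal generated by $s_\theta$ for \emph{every} pair $(\theta,\sigma)$, so in $A_1$ any nonzero element of $A$ already generates all of $A$; iterating and taking the union then gives simplicity immediately. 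If you want to salvage your outline, the minimal change is to replace the two letters $a,b$ by a family of fresh letters indexed by pairs of monic elements, and to impose relations linking every pair rather than funneling everything to a single target.
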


\begin{proof}
Let $A$ be an $L$-algebra over  $k$ with a $k$-basis $X = \{x_i|i\in
I\}$. By Lemma \ref{eml}, $A =L(X|S)$, where $S = \{x_i\prec
x_j-\{x_i\prec x_j\},\ x_i\succ x_j-\{x_i\succ x_j\}|i,\ j \in I\}$.
Let $I$ be a well-ordered set. Then with the ordering (\ref{1}), $S$
is clearly a Gr\"{o}bner-Shirshov basis in $L(X)$.

We well order the set of monic elements of $A$. Denote by $T$ the
set of indices for the resulting well-ordered set. Consider the set
$T^2=\{(\theta,\sigma)\}$ and assign
$(\theta,\sigma)<(\theta',\sigma')$ if either $\theta<\theta'$ or
$\theta=\theta'$ and $\sigma<\sigma'$. Then $T^2$ is also a
well-ordered set.

For each ordered pair of elements $s_\theta, s_\sigma\in\ A,\
\theta, \sigma\in T$, introduce the letters
$x_{\theta\sigma},y_{\theta\sigma}$.

Let $A_1$ be the $L$-algebra generated by
$$
X_1=\{x_i, y_{\theta\sigma}, x_{\varrho\tau} | i\in I, \ \theta,
\sigma, \varrho, \tau\in T\}
$$
and define the relation set $S_1=F_1\cup F_2\cup F_3\cup F_4$, where
\begin{eqnarray*}
F_1&=&\{x_i\succ x_j-\{x_i\succ x_j\}| i,\ j \in
I\},\\
F_2&=&\{x_i\prec x_j-\{x_i\prec x_j\}| i,\ j \in
I\},\\
F_3&=&\{x_{\theta\sigma}\prec(s_\theta\succ
y_{\theta\sigma})-s_\sigma| ({\theta,\sigma})\in T^2\},\\
F_4&=&\{x_i\succ ((((x_j\prec u)\prec v_1)\prec\cdots)\prec v_n
)-(((\{x_i\succ x_j\}\prec u)\prec v_1)\prec\cdots)\prec v_n\ |\\
&&\ \ \ i,\ j \in I,\ n\in \mathbb{N},\ u,v_l\in L(X_1),\ u\in
Irr(F_1\cup F_2\cup F_3)-X,\\
&&\ \ \  v_l\in Irr(F_1\cup F_2\cup F_3),\ l=1,\dots, n\}.
\end{eqnarray*}
Then, by a similar proof of Theorem \ref{em2}, $S_1$ is a
Gr\"{o}bner-Shirshov basis in $L(X_1)$ with the ordering (\ref{1}),
where $x_i<y_{\theta\sigma}<x_{\varrho\tau}$.

Thus by Theorem \ref{cd}, $A$ can be embedded into $A_1$. In ${A}_1$
every monic element $f_\theta$ of the subalgebra ${A}$ generates an
ideal containing algebra $A$.

By the same construction of the $L$-algebra $A_1$ from $A$, we get
the $L$-algebra $A_2$ from $A_1$ and so on. As a result, we acquire
an ascending chain of $L$-algebras $ A=A_0\subset A_1\subset
A_2\subset\cdots$. Let $ {\cal A}=\cup_ {k=0}^ {\infty}A_k$. In
${\cal A}$, every two nonzero elements generates the same ideal.
Then ${\cal A}$ is a simple $L$-algebra.
 \ \hfill \end{proof}

\begin{theorem}\label{em4}
Every countably generated $L$-algebra over a countable field $k$ can
be embedded into a simple two-generated $L$-algebra.
\end{theorem}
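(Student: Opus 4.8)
The plan is to combine the two embedding constructions already exhibited in the paper: the "two-generation" embedding of Theorem~\ref{em2} and the "simplification" chain of the preceding theorem. Start with a countably generated $L$-algebra $A$ over a countable field $k$, with countable $k$-basis $X=\{x_i\mid i\in\mathbb{N}^+\}$, so $A=L(X\mid S)$ by Lemma~\ref{eml}. First I would adjoin, exactly as in the proof of the simplification theorem, new generators $x_{\theta\sigma},y_{\theta\sigma}$ indexed by ordered pairs of monic elements $s_\theta,s_\sigma$ of $A$, with relations $F_3=\{x_{\theta\sigma}\prec(s_\theta\succ y_{\theta\sigma})-s_\sigma\}$ forcing any nonzero element to generate the whole algebra, together with the relations $F_1,F_2$ defining $A$ and the "straightening" relations $F_4$ of the form $x_i\succ(\cdots((x_j\prec u)\prec v_1)\prec\cdots)-\cdots$ needed to keep $F_1\cup F_2\cup F_3$ closed under right multiplication. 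The crucial point here is that since $k$ and $X$ are countable, the algebra $A$ has only countably many monic elements, so $T$ is countable and hence the enlarged generating set is still countable; iterating as before gives a simple $L$-algebra ${\cal A}=\bigcup_k A_k$ that is still countably generated and contains $A$.

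Next I would feed this countably generated simple $L$-algebra ${\cal A}$ into the two-generated embedding of Theorem~\ref{em2}. That theorem embeds any countably generated $L$-algebra into a two-generated one $B=L(X\cup\{a,b\}\mid S_1)$ where $S_1=F_1\cup F_2\cup F_3\cup F_4$, with $F_3=\{a\prec\underbrace{(b\prec(\cdots(b\prec b))}_i)-x_i\}$ expressing each old generator $x_i$ in terms of $a,b$. The obstacle — and the reason one cannot simply quote the two theorems in succession — is that the two-generated algebra $B$ produced this way need not be simple: the embedding ${\cal A}\hookrightarrow B$ is not surjective onto $B$, and $B$ may well have proper ideals not meeting ${\cal A}$. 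So I would instead interleave the two constructions: build a single tower in which each step \emph{both} adds the "simple-making" generators $x_{\theta\sigma},y_{\theta\sigma}$ (over the monic elements produced so far) \emph{and} the two distinguished generators $a,b$ together with the relations $F_3$ writing \emph{all} current generators (old $x_i$'s, the $x_{\theta\sigma}$'s, the $y_{\theta\sigma}$'s, and any straightening auxiliaries) as $L$-words in $a,b$ of the shape $a\prec(b\prec(\cdots\prec b))$ with distinct bracketings. At every stage the Composition-Diamond lemma for $L$-algebras (Theorem~\ref{cd}) must be applied to verify that the combined relation set is a Gr\"obner--Shirshov basis: the compositions of right multiplication and of inclusion are handled exactly as in Theorems~\ref{em2} and the simplification theorem, since the new $F_4$-type relations are precisely designed to absorb right multiplication, and the only inclusion ambiguities come from an $\overline{f_4}$ sitting inside another, which is trivial by the argument of Theorem~\ref{GSBl}.

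Concretely, the tower is $A=A_0\subset A_1\subset A_2\subset\cdots$, where $A_{n+1}$ is generated by the generators of $A_n$ together with fresh symbols $a_{n+1},b_{n+1}$ (or reuse a fixed pair $a,b$ if one arranges the bracketing-length indexing to stay injective across stages — this bookkeeping is the fiddly part) and with relation set $F_1^{(n+1)}\cup F_2^{(n+1)}\cup F_3^{(n+1)}\cup F_4^{(n+1)}$: $F_1^{(n+1)},F_2^{(n+1)}$ record the multiplication table of $A_n$ on its basis, $F_3^{(n+1)}$ contains both the "$a\prec(b\prec\cdots)$" identifications of all current generators and the "$x_{\theta\sigma}\prec(s_\theta\succ y_{\theta\sigma})-s_\sigma$" relations over ordered pairs of monic elements of $A_n$, and $F_4^{(n+1)}$ is the straightening set making the rest closed under right multiplication. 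Countability of $k$ guarantees each $A_n$ is countably generated, so the construction never leaves the countable world. Setting ${\cal B}=\bigcup_{n\geq 0}A_n$, the map $A\hookrightarrow{\cal B}$ is injective by repeated application of Theorem~\ref{cd} (each $A_n\hookrightarrow A_{n+1}$ is injective because the relevant $S_1$ is a Gr\"obner--Shirshov basis and the old normal words remain in $Irr(S_1)$); ${\cal B}$ is two-generated because by construction \emph{every} generator of every $A_n$ equals an $L$-word in the two distinguished symbols inside $A_{n+1}\subseteq{\cal B}$; and ${\cal B}$ is simple because any two nonzero elements $f,g$ lie in some common $A_n$, hence the relation $x_{\theta\sigma}\prec(f\succ y_{\theta\sigma})-g$ introduced at stage $n+1$ forces $g$ into the ideal generated by $f$, so every nonzero ideal of ${\cal B}$ is all of ${\cal B}$.

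The main obstacle I anticipate is not any single composition computation — those reduce, case by case, to the arguments already carried out in Theorems~\ref{GSBl}, \ref{em2} and the simplification theorem — but rather organizing the indexing so that (i) the "$a\prec(b\prec\cdots)$"-bracketings assigned to the ever-growing (but always countable) set of generators at each stage are pairwise distinct and compatible across stages, and (ii) the well-ordering on $(X_1,\Omega)$ underlying ordering~(\ref{1}) is chosen with $a,b$ small enough (e.g. $a<b<$ everything else) that the leading terms $\overline{f_{3(i)}}$ are the bracketed monomials in $a,b$ as intended, so that $Irr(S_1)$ genuinely contains the normal words of $A_n$. Once the bookkeeping is fixed, the verification that $S_1$ is a Gr\"obner--Shirshov basis at each stage, and hence that $A$ embeds into the simple two-generated ${\cal B}$, follows as indicated.
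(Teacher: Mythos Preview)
Your plan is essentially the paper's: interleave the two-generation relations ($a\prec(b\prec\cdots)$ naming every generator) with the simple-making relations, adding the straightening relations of $F_4$-type to close under right multiplication, and verify the whole package is a Gr\"obner--Shirshov basis so that $A$ survives inside $Irr(S)$. The paper organizes this as a \emph{single} presentation $B=L(X\mid S)$ with $X=\bigcup_n X_n$ and $S=F_{1\text{--}5}\cup F_6\cup F_7$ built in layers indexed by $l$, rather than as an external tower of $L$-algebras $A_0\subset A_1\subset\cdots$; this avoids having to re-present each $A_n$ via Lemma~\ref{eml} on a fresh basis (which, as you implicitly noticed, is awkward once $A_n$ already contains $a,b$). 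The paper also fixes the bijective bracketing bookkeeping you flagged by using three distinct bracket-shapes in $a,b$ (relations $F_3,F_4,F_5$) to name the $x_i$, the $x_m^{(l)}$, and the $y_m^{(l)}$ respectively.

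One technical point you glossed over and would have to repair: the simple-making relation in the form $x_{\theta\sigma}\prec(s_\theta\succ y_{\theta\sigma})-s_\sigma$ only has the displayed leading term when $s_\sigma$ has degree~$1$ (as in the preceding theorem, where $s_\sigma$ is a linear combination of the basis letters $x_i$). At stage $n\geq 1$ of your tower the monic elements of $A_n$ are represented by normal words of arbitrary length, so $\overline{s_\sigma}$ can outweigh the left-hand side and the relation acquires the wrong leading term, wrecking the Gr\"obner--Shirshov verification. The paper's fix is to pad: its $F_{6(l)}$ relations read
\[
\underbrace{x_m^{(l)}\prec(\cdots\prec(x_m^{(l)}}_{|\overline{g^{(l-1)}}|}\prec(f^{(l-1)}\succ y_m^{(l)})))-g^{(l-1)},
\]
with $|\overline{g^{(l-1)}}|$ copies of $x_m^{(l)}$ on the left, guaranteeing that the left side always has strictly greater degree. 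With that adjustment your outline goes through and matches the paper.
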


\begin{proof}
Let $A$ be a countably generated $L$-algebra over a countable field
$k$. We may assume that $A$ has a countable $k$-basis $X_0 =
\{x_i|i\in \mathbb{N}^+\}$. By Lemma \ref{eml}, $A =L(X_0|S_0)$,
where $S_0 = \{x_i\prec x_j-\{x_i\prec x_j\},\ x_i\succ
x_j-\{x_i\succ x_j\}|i,\ j \in \mathbb{N}^+\}$.

Let $A_0=L(X_0)$, $A_0^+=A_0-\{0\}$ and fix the bijection
$$
(A_0^+,A_0^+)\leftrightarrow\{(x_m^{(1)},y_m^{(1)})|m\in
\mathbb{N}^+\}.
$$

Let $X_1=X_0\cup\{x_m^{(1)},y_m^{(1)},a,b|m\in \mathbb{N}^+\}$,
$A_1=L(X_1)$, $A_1^+=A_1-\{0\}$ and fix the bijection
$$
(A_1^+,A_1^+)\leftrightarrow\{(x_m^{(2)},y_m^{(2)})|m\in
\mathbb{N}^+\}.
$$

For $n\geqslant 1$, let
$X_{n+1}=X_n\cup\{x_m^{(n+1)},y_m^{(n+1)}|m\in \mathbb{N}^+\}$,
$A_{n+1}=L(X_{n+1})$, $A_{n+1}^+=A_{n+1}-\{0\}$ and fix the
bijection
$$
(A_{n+1}^+,A_{n+1}^+)\leftrightarrow\{(x_m^{(n+2)},y_m^{(n+2)})|m\in
\mathbb{N}^+\}.
$$

Consider the chain of the free $L$-algebras
$$
A_0\subset A_1\subset\cdots\subset A_n\subset\cdots.
$$

Let $X=\cup_{n=0}^{\infty}X_n$. Then $L(X)=\cup_{n=0}^{\infty}A_n$.

Define
\begin{eqnarray*}
F_1&=&\{f_{1(ij)}=x_i\succ x_j-\{x_i\succ x_j\}| i,j\in  \mathbb{N}^+\},\\
F_2&=&\{f_{2(ij)}=x_i\prec x_j-\{x_i\prec x_j\}| i,j\in  \mathbb{N}^+\},\\
F_3&=&\{f_{3(i)}=a\prec\underbrace{(b\prec(\cdots (b\prec
b))}_i)-x_i| i\in \mathbb{N}^+\},\\
F_4&=&\{f_{4(lm)}=a\prec (\underbrace{b\prec\cdots\prec (b}_m\prec
(\underbrace{a\prec\cdots \prec (a\prec a}_l))))-x_m^{(l)}| m,l\in
\mathbb{N}^+\},\\
F_5&=&\{f_{5(lm)}=a\prec (\underbrace{b\prec\cdots\prec (b}_m\succ
(\underbrace{a\succ\cdots \succ (a\succ a}_l))))-y_m^{(l)}| m,l\in
\mathbb{N}^+\},
\end{eqnarray*}
Let $F_{1-5}=F_1\cup F_2\cup F_3\cup F_4\cup F_5.$
\begin{eqnarray*}
F_{6(1)}&=&\{f_{6(1m)}=x_m^{(1)}\prec (f^{(0)}\succ
y_m^{(1)})-g^{(0)}\ |\ f^{(0)},g^{(0)}\in k(Irr(F_1\cup F_2)\cap L(X_0)),\\
&&\ \ \ \ f^{(0)} \mbox{ and } g^{(0)} \mbox{ are monic}\},\\
F_{7(1)}&=&\{f_{7(1ij)}=x_i\succ ((((x_j\prec u)\prec
v_1)\prec\cdots)\prec v_n )-\\
&&\ \ (((\{x_i\succ x_j\}\prec u)\prec v_1)\prec\cdots)\prec v_n\ |\
u\in Irr(F_{1-5}\cup F_{6(1)})\cap
L(X_1)-X_0,\\
&&\ \ \ \ v_r\in Irr(F_{1-5}\cup F_{6(1)})\cap L(X_1),\
r=1,\dots,n,\ n\in \mathbb{N},\},
\end{eqnarray*}

For $l\geq2$, let
\begin{eqnarray*}
F_{6(l)}&=&\{f_{6(lm)}=\underbrace{x_m^{(l)}\prec( \cdots\prec
(x_m^{(l)}}_{|\overline{g^{(l-1)}}|}\prec (f^{(l-1)}\succ
y_m^{(l)})))-g^{(l-1)}\ |\ f^{(l-1)},g^{(l-1)}\in \\
&&\ \  k(Irr(F_{1-5}\cup F_{6(1)}\cup \dots \cup F_{6(l-1)}\cup
F_{7(1)}\cup\dots\cup F_{7(l-1)})\cap
L(X_{l-1}))\ \mbox{ are monic}\},\\
F_{7(l)}&=&\{f_{7(lij)}=x_i\succ ((((x_j\prec u)\prec
v_1)\prec\cdots)\prec v_n )-\\
&&(((\{x_i\succ x_j\}\prec u)\prec v_1)\prec\cdots)\prec v_n\ |\
n\in \mathbb{N},\\
&&u\in Irr(F_{1-5}\cup F_{6(1)}\cup \dots \cup F_{6(l)}\cup
F_{7(1)}\cup\dots\cup F_{7(l-1)})\cap L(X_l)-X_0,\\
&&v_r\in Irr(F_{1-5}\cup F_{6(1)}\cup \dots \cup F_{6(l)}\cup
F_{7(1)}\cup\dots\cup F_{7(l-1)})\cap L(X_l),\ r=1,\dots,n\}.
\end{eqnarray*}

Let $F_{6}=\cup_{l\geqslant 1}F_{6(l)}$, $F_{7}=\cup_{l\geqslant
1}F_{7(l)}$.

Now let $B=L(X|S)$, where $S=F_{1-5}\cup F_{6}\cup F_{7}$.

We prove that $S$ is a Gr\"{o}bner-Shirshov basis in $L(X)$ with the
ordering (\ref{1}), where $X$ is a well-ordered set.

The only ambiguity $w$ of composition of inclusion in $S$ is
$\overline{f_{7(lij)}}|_{\overline{f'_{7(l'i'j')}}}$ where
$\overline{f'_{7(l'i'j')}}$ is a subword of $u$ or some $v_r$. It is
trivial by a similar proof in Theorem \ref{GSBl}.

The possible compositions of  right multiplication are:
$f_{1(ij)}\prec u$ and $f_{7(lij)}\prec u$, $u\in N(X),\
l\in\mathbb{N}^+$.

For $u\in N(X)$, $\exists l\in \mathbb{N}$ such that $u\in L(X_l)$.
We can find an $f=\sum_j\alpha_jw_j\in L(X_l)$, where $\alpha_j\in
k$, $w_j\in Irr(F_{1-5}\cup F_{6(1)}\cup \dots \cup F_{6(l)}\cup
F_{7(1)}\cup\dots\cup F_{7(l-1)})\cap L(X_l)$ such that $u\equiv f\
\ mod(S)$, and each $w_j\leq u$. So
$$
f_{1(ij)}\prec u\equiv f_{1(ij)}\prec f\equiv
\sum_j\alpha_jf_{1(ij)}\prec w_j\ \ mod(S).
$$
For each $w_j$, if $w_j\notin X_0$, then by $F_{7(l)}$,
$$
f_{1(ij)}\prec w_j=(x_i\succ x_j)\prec w_j- \{x_i\succ x_j\}\prec
w_j \equiv 0\ \ mod(S).
$$
If $w_j=x_k\in X_0$, then
$$
f_{1(ij)}\prec w_j\equiv 0\ \ mod(S).
$$
So all the compositions of right multiplication of $f_{1(ij)}$ is
trivial.

Now we consider the case of $f_{7(lij)}$. For any $w\in N(X)$,
$\exists l'\in \mathbb{N}$ such that $w\in L(X_{l'})$. If $l\geq
l'$, we can find an  $f=\sum_r\alpha_rw_r\in L(X_l)$, where
$\alpha_r\in k$, $w_r\in Irr(F_{1-5}\cup F_{6(1)}\cup \dots \cup
F_{6(l)}\cup F_{7(1)}\cup\dots\cup F_{7(l-1)})\cap L(X_l)$ such that
$w\equiv f\ \ mod(S)$, and $w_r\leq w$. So
$$
f_{7(lij)}\prec w\equiv f_{7(lij)}\prec f\equiv
\sum_r\alpha_rf_{7(lij)}\prec w_r\equiv0\ \ mod(S)
$$
since for each $w_r$, $f_{7(lij)}\prec w_r$ is of the form
$f_{7(lij)}$.

Suppose that $l<l'$. For any $t\in\{u, v_1,\dots,v_n,w\}$, we can
find an  $f=\sum_{tr}\alpha_{tr}w_{tr}\in L(X_{l'})$, where
$\alpha_{tr}\in k$, $w_{tr}\in Irr(F_{1-5}\cup F_{6(1)}\cup \dots
\cup F_{6(l')}\cup F_{7(1)}\cup\dots\cup F_{7(l'-1)})\cap L(X_{l'})$
such that $t\equiv f\ \ mod(S)$, and $w_{tr}\leq t$. Then
$f_{7(lij)}\prec w\equiv0\ \ mod(S)$.

So $S$ is a Gr\"{o}bner-Shirshov basis in $L(X)$. By Theorem
\ref{cd}, $A$ can be embedded into $B=L(X|S)$. By $F_3$, $F_4$ and
$F_5$, $B$ is generated by $\{a, b\}$. Note that for every non-zero
element $f$ in $B$, there exists a monic polynomial $f^{(l)}\in
k(Irr(F_{1-5}\cup F_{6(1)}\cup \dots \cup F_{6(l)}\cup
F_{7(1)}\cup\dots\cup F_{7(l)})\cap L(X_{l}))$ such that $\alpha
f^{(l)}= f$ where $\alpha\in k$. Then by $F_6$, every two non-zero
elements in $B$ generate the same ideal. Thus, $B$ is simple.

The proof is completed.
 \ \hfill \end{proof}

\begin{theorem}
$A$, $B$, $C$ are arbitrary $L$-algebras over a field $k$. If
$|k|\leq \dim(B*C)$ and $|A|\leq|B*C|$, where $B*C$ is the free
product of $B$ and $C$, then $A$, $B$, $C$ can be embedded into a
simple $L$-algebra generated by $B$ and $C$.
\end{theorem}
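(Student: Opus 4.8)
The plan is to fuse three constructions already in the paper: the Gr\"obner--Shirshov basis of the free product $B*C$ (Theorem \ref{t10}), the ``naming a new generator by a word, plus a right-multiplication correction'' device of Theorem \ref{em2}, and the iterated ``one pair of auxiliary generators per ordered pair of monic elements'' construction that forces simplicity in Theorem \ref{em4}. First I would fix $k$-bases $X$ of $B$, $Y$ of $C$, and $\{a_\lambda\mid\lambda\in\Lambda\}$ of $A$; by Lemma \ref{eml} and the remarks preceding Theorem \ref{t10} we have $B*C=L(X\cup Y\mid S_1\cup S_2)$, and the set $S^{fp}$ of Theorem \ref{t10} is a Gr\"obner--Shirshov basis, so $N^{\ast}:=Irr(S^{fp})$ is a $k$-basis of $B*C$. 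Assuming $B\neq 0\neq C$ (the cases $B=0$ or $C=0$ being degenerate), the words $x\succ y,\ x\succ(x\succ y),\dots$ for fixed $x\in X$, $y\in Y$ all lie in $N^{\ast}$, so $\kappa:=\dim(B*C)=|N^{\ast}|$ is infinite; together with $|k|\le\kappa$ this gives $|B*C|=\kappa$ and, more to the point, that the set of monic elements of any $L$-algebra obtained from $B*C$ by adjoining at most $\kappa$ further basis vectors again has cardinality $\kappa$. Since $|\Lambda|\le\dim A\le|A|\le|B*C|=\kappa$, we may fix inside $N^{\ast}$ a set of pairwise subword-incomparable normal words of cardinality $\kappa$ to serve as a reservoir of ``names'' — for instance the antichain $\{x_{\alpha}\succ y_{\beta}\mid x_{\alpha}\in X,\ y_{\beta}\in Y\}$, which moreover has the property that all its $\prec$- and $\succ$-products are again distinct irreducible normal words; this last feature is what makes the naming relations below compatible with $S^{fp}$.

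Next I would run the stagewise construction of Theorem \ref{em4} over $B*C$. Keep $X\cup Y$ as generators (so the final algebra is generated by $B$ and $C$), adjoin the letters $\{a_\lambda\}$, and at stage $m$ adjoin, for each ordered pair $(f,g)$ of monic elements of the stage-$m$ algebra, two new letters $x_{fg},y_{fg}$. The defining relations are: $S_1\cup S_2$ and the corrections $S^{fp}$ of Theorem \ref{t10} (producing $B*C$); the relations $S_A$ of Lemma \ref{eml} among the $a_\lambda$ (producing a copy of $A$); for each new letter $c$ a naming relation $p_c-c$, where $p_c$ is a distinct element of the reservoir and $p_c>c$ holds automatically since $|p_c|=2>1=|c|$; the simplicity relations $\underbrace{x_{fg}\prec(\cdots\prec(x_{fg}}_{|\overline{g}|}\prec(f\succ y_{fg})))-g$ as in $F_6$ of Theorem \ref{em4}; and the $F_4$/$F_7$-type relations $x_i\succ(((x_j\prec u)\prec v_1)\prec\cdots)\prec v_n-(((\{x_i\succ x_j\}\prec u)\prec v_1)\prec\cdots)\prec v_n$ with $u,v_l$ irreducible (excluding the obvious degenerate cases, as in Theorem \ref{em2}), which are needed to trivialize the right-multiplication compositions. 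Let $\mathcal{B}=L(Z\mid S)$ be the union over all stages.

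By Theorem \ref{cd} it then suffices to show that $S$ is a Gr\"obner--Shirshov basis. The compositions of inclusion occur only among the $F_7$-type relations — trivial by the computation of Theorem \ref{GSBl} — and in the overlaps of the $F_7$-type relations with the naming relations, which reduce because the naming words were chosen subword-incomparable and the $F_7$-type relations were taken with $u,v_l$ ranging over \emph{all} irreducible words of the current alphabet (so that rewriting $p_c\mapsto c$ inside such an overlap lands on another $F_7$-type leading term). The compositions of right multiplication are $f_{1(ij)}\prec u$, $h\prec u$ with $h$ an $F_7$-type relation, and $(x_{fg}\prec(\cdots(f\succ y_{fg})))\prec u$; each is reduced to $0$ by first rewriting $u$ to an irreducible word $\le u$ using the non-$F_7$ relations and then applying the appropriate $F_7$-type relation, exactly as in the proofs of Theorems \ref{em2} and \ref{em4}. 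The one point requiring care is that the set of monic elements of every stage-algebra stays of cardinality $\kappa$, so the naming reservoir is never exhausted: here $|k|\le\dim(B*C)$ is used (to keep the monic count at $\kappa$ rather than $\max(\kappa,|k|)$), while $|A|\le|B*C|$ was what made $\{a_\lambda\}$ fit into the reservoir in the first place.

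Granting that $S$ is a Gr\"obner--Shirshov basis, the conclusions follow from Theorem \ref{cd}. The only relations of $S$ lying inside $L(X)$, $L(Y)$, $L(\{a_\lambda\})$ are $S_1$, $S_2$, $S_A$ respectively, and every letter, having length one, is irreducible; hence $Irr(S)$ contains the normal forms of $B$, $C$ and $A$, so all three embed into $\mathcal{B}$. Every new letter equals, by its naming relation, a word in $X\cup Y$, so $\mathcal{B}$ is generated by $B$ and $C$. Finally $\mathcal{B}$ is simple: a nonzero $h\in\mathcal{B}$ is a $k$-multiple of a monic $f$ occurring at some stage, and for every monic $g$ the stage-relation $g=x_{fg}\prec(\cdots(f\succ y_{fg}))$ puts $g$, hence all of $\mathcal{B}$, in the ideal generated by $f$. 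I expect the Gr\"obner--Shirshov verification of $S$ to be the main obstacle, because the simplicity relations now involve \emph{arbitrary} monic $f$ rather than the highly structured monomials of Theorem \ref{em4}, so one must check that right-multiplying them and overlapping them with the $F_7$-type relations always reduces trivially, while simultaneously keeping the cardinality bookkeeping tight enough that the reservoir drawn from $N^{\ast}$ suffices — it is exactly there that both hypotheses $|k|\le\dim(B*C)$ and $|A|\le|B*C|$ are consumed.
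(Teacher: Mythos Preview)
Your plan is essentially the paper's own proof: adjoin letters for a basis of $A$ and for the auxiliary pairs $(x_{fg},y_{fg})$, name each new letter by a normal word of $B*C$, impose the simplicity relations of Theorem~\ref{em4} ($F_{4(l)}$-type) and the right-multiplication corrections ($F_{5}$-type, your ``$F_7$-type''), and verify a Gr\"obner--Shirshov basis. The paper even does the cardinality bookkeeping the same way, splitting into the cases $\dim B,\dim C<\infty$ and $\max(\dim B,\dim C)\ge\aleph_0$.

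There is, however, one concrete slip. Your proposed reservoir of naming words is the antichain $\{x_\alpha\succ y_\beta\mid x_\alpha\in X,\ y_\beta\in Y\}$, and you need it to have cardinality $\kappa=\dim(B*C)$. But $|X\times Y|=\dim B\cdot\dim C$, which is \emph{finite} when $B$ and $C$ are both finite-dimensional, whereas $\kappa=\aleph_0$ in that case; so your reservoir is far too small exactly in the first case the paper treats. The paper's fix is to use words of unbounded length built from a single pair $b_0\in X$, $c_0\in Y$: in the finite-dimensional case the naming words are
\[
\underbrace{(((b_0\prec c_0)\prec(b_0\prec c_0))\prec\cdots\prec(b_0\prec c_0))}_{i}\succ(b_0\prec c_0),\qquad i\ge1,
\]
giving $\aleph_0$ many; in the infinite-dimensional case one replaces the rightmost $c_0$ by $c_\beta$, $1\le\beta<\alpha$, to obtain $\alpha$ many. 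These words are pairwise subword-incomparable and lie in $Irr(S^{fp})$, and (crucially) their leading $\succ$ has a left argument of length $\ge2$, so they create no inclusion composition with the $x_i\succ(\cdots)$ leading terms of the correction relations. Once you swap in this reservoir (or any antichain in $Irr(S^{fp})$ of the correct cardinality with that property), your argument goes through and coincides with the paper's.
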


\begin{proof}
We consider firstly the case that $B$ and $C$ are
finite-dimensional. In this case, $|k|\leq\aleph_0$ and
$|A|\leq\aleph_0$.

We may assume that $A$ has a countable $k$-basis $X_A=\{a_i|i\in
I_A\}$, $B$ has a finite $k$-basis $X_B=\{b_i|i\in I_B\}$ and $C$
has a finite $k$-basis $X_C=\{c_i|i\in I_C\}$. By Lemma \ref{eml},
$A=L(X_A|S_A)$, where $S_A=\{a_i\prec a_j-\{a_i\prec a_j\},\
a_i\succ a_j-\{a_i\succ a_j\}|i,\ j \in I_A\}$, $B=L(X_B|S_B)$,
where $S_B=\{b_i\prec b_j-\{b_i\prec b_j\},\ b_i\succ b_j-\{b_i\succ
b_j\}|i,\ j \in I_B\}$, $C=L(X_C|S_C)$, where $S_C=\{c_i\prec
c_j-\{c_i\prec c_j\},\ c_i\succ c_j-\{c_i\succ c_j\}|i,\ j \in
I_C\}$.

Let $X_0=X_A\cup X_B\cup X_C$, $L(X_0)^+=L(X_0)-\{0\}$ and fix the
bijection
$$
(L(X_0)^+,L(X_0)^+)\leftrightarrow\{(x_m^{(1)},y_m^{(1)})|m\in
\mathbb{N}^+\}.
$$

For $n\geqslant 0$, let
$X_{n+1}=X_n\cup\{x_m^{(n+1)},y_m^{(n+1)}|m\in \mathbb{N}^+\}$,
$L(X_{n+1})^+=L(X_{n+1})-\{0\}$ and fix the bijection
$$
(L(X_{n+1})^+,L(X_{n+1})^+)\leftrightarrow\{(x_m^{(n+2)},y_m^{(n+2)})|m\in
\mathbb{N}^+\}.
$$

Let $X=\cup_{n=0}^{\infty}X_n$. Then
$L(X)=\cup_{n=0}^{\infty}L(X_{n})$.

Note that $|X_A\cup\{x_m^{(n)},y_m^{(n)}|m\in \mathbb{N}^+,\ n\in
\mathbb{N}\}|=\aleph_0$. Define
\begin{eqnarray*}
F_{1a}&=&\{f_{1a(ij)}=a_i\succ a_j-\{a_i\succ a_j\}| i,j\in  I_A\},\\
F_{1b}&=&\{f_{1b(ij)}=b_i\succ b_j-\{b_i\succ b_j\}| i,j\in  I_B\},\\
F_{1c}&=&\{f_{1c(ij)}=c_i\succ c_j-\{c_i\succ c_j\}| i,j\in  I_C\},\\
F_{2a}&=&\{f_{2a(ij)}=a_i\prec a_j-\{a_i\prec a_j\}| i,j\in  I_A\},\\
F_{2b}&=&\{f_{2b(ij)}=b_i\prec b_j-\{b_i\prec b_j\}| i,j\in  I_B\},\\
F_{2c}&=&\{f_{2c(ij)}=c_i\prec c_j-\{c_i\prec c_j\}| i,j\in  I_C\},\\
F_{3}&=&\{f_{3(i)}=\underbrace{(((b_0\prec c_0)\prec(b_0\prec
c_0))\prec\cdots\prec(b_0\prec c_0) )}_i\succ(b_0\prec c_0)-d_i\ |\
i\in \mathbb{N}^+\},
\end{eqnarray*}
where $ b_0\in X_B,\ c_0 \in X_C$ are two fixed elements and
$X_A\cup\{x_m^{(n)},y_m^{(n)}|m\in \mathbb{N}^+,\ n\in
\mathbb{N}\}=\{d_i|i\in \mathbb{N}^+\}$.

Let $F_{1-3}=F_{1a}\cup F_{1b}\cup F_{1c}\cup F_{2a}\cup F_{2b}\cup
F_{2c}\cup F_{3}$.
\begin{eqnarray*}
F_{4(1)}&=&\{f_{4(1m)}=x_m^{(1)}\prec (f^{(0)}\succ
y_m^{(1)})-g^{(0)}\ |\ f^{(0)},g^{(0)}\in\\
&&\ \ k(Irr(F_{1a}\cup F_{1b}\cup F_{1c}\cup F_{2a}\cup F_{2b}\cup
F_{2c})\cap L(X_0))\mbox{ are monic}\},\\
F_{5(a1)}&=&\{f_{5(a1ij)}=a_i\succ ((((a_j\prec u)\prec
v_1)\prec\cdots)\prec v_n )-\\
&&\ \ (((\{a_i\succ a_j\}\prec u)\prec v_1)\prec\cdots)\prec v_n\ |\
u\in Irr(F_{1-3}\cup F_{4(1)})\cap L(X_1)-X_A,\\
&&\ \ v_r\in Irr(F_{1-3}\cup F_{4(1)})\cap L(X_1),\ r=1,\dots,n,\
n\in \mathbb{N}\},\\
F_{5(b1)}&=&\{f_{5(b1ij)}=b_i\succ ((((b_j\prec u)\prec
v_1)\prec\cdots)\prec v_n )-\\
&&\ \ (((\{b_i\succ b_j\}\prec u)\prec v_1)\prec\cdots)\prec v_n\ |\
u\in Irr(F_{1-3}\cup F_{4(1)})\cap L(X_1)-X_B,\\
&&\ \ v_r\in Irr(F_{1-3}\cup F_{4(1)})\cap L(X_1),\ r=1,\dots,n,\
n\in \mathbb{N}\},\\
F_{5(c1)}&=&\{f_{5(c1ij)}=c_i\succ ((((c_j\prec u)\prec
v_1)\prec\cdots)\prec v_n )-\\
&&\ \ (((\{c_i\succ c_j\}\prec u)\prec v_1)\prec\cdots)\prec v_n\ |\
u\in Irr(F_{1-3}\cup F_{4(1)})\cap L(X_1)-X_C,\\
&&\ \ v_r\in Irr(F_{1-3}\cup F_{4(1)})\cap L(X_1),\ r=1,\dots,n,\
n\in \mathbb{N}\},\\
F_{5(31)}&=&\{f_{5(31i)}=\underbrace{(((b_0\prec c_0)\prec(b_0\prec
c_0))\prec\cdots\prec(b_0\prec c_0) )}_i\\
&&\ \ \succ (((((b_0\prec c_0)\prec u)\prec v_1)\prec\cdots)\prec
v_n ) -(((d_i\prec u)\prec v_1)\prec\cdots)\prec v_n\ |\\
&&\ \ u,v_r\in Irr(F_{1-3}\cup F_{4(1)})\cap L(X_1),\ r=1,\dots,n,\
n\in \mathbb{N}\}.
 \end{eqnarray*}

Denote $T_{l}=F_{1-3}\cup F_{4(1)}\cup \dots\cup F_{4(l)}\cup
F_{5(a1)}\cup \dots\cup F_{5(al)}\cup F_{5(b1)}\cup \dots\cup
F_{5(bl)}\cup F_{5(c1)}\cup \dots\cup F_{5(cl)}\cup F_{5(31)}\cup
\dots\cup F_{5(3l)}.$ Then $T_1$ is defined. We use induction to
define $T_l$.

Assume $l\geq2$. Let
 \begin{eqnarray*}
F_{4(l)}&=&\{f_{4(lm)}=\underbrace{x_m^{(l)}\prec( \cdots\prec
(x_m^{(l)}}_{| \overline{g^{(l-1)}}|}\prec (f^{(l-1)}\succ
y_m^{(l)})))-g^{(l-1)}\ |\\
&&\ \ f^{(l-1)},g^{(l-1)}\in k(Irr(T_{l-1})\cap L(X_{l-1}))
\mbox{ are monic}\},\\
F_{5(al)}&=&\{f_{5(alij)}=a_i\succ ((((a_j\prec u)\prec
v_1)\prec\cdots)\prec v_n )-\\
&&\ \ (((\{a_i\succ a_j\}\prec u)\prec v_1)\prec\cdots)\prec v_n\ |\
u\in Irr(T_{l-1}\cup F_{4(l)})\cap L(X_l)-X_A,\\
&&\ \ v_r\in Irr(T_{l-1}\cup F_{4(l)})\cap L(X_l),\ r=1,\dots,n,\
n\in \mathbb{N}\},\\
F_{5(bl)}&=&\{f_{5(blij)}=b_i\succ ((((b_j\prec u)\prec
v_1)\prec\cdots)\prec v_n )-\\
&&\ \ (((\{b_i\succ b_j\}\prec u)\prec v_1)\prec\cdots)\prec v_n\ |\
u\in Irr(T_{l-1}\cup F_{4(l)})\cap L(X_l)-X_B,\\
&&\ \ v_r\in Irr(T_{l-1}\cup F_{4(l)})\cap L(X_l),\ r=1,\dots,n,\
n\in \mathbb{N}\},\\
F_{5(cl)}&=&\{f_{5(clij)}=c_i\succ ((((c_j\prec u)\prec
v_1)\prec\cdots)\prec v_n )-\\
&&\ \ (((\{c_i\succ c_j\}\prec u)\prec v_1)\prec\cdots)\prec v_n\ |\
u\in Irr(T_{l-1}\cup F_{4(l)})\cap L(X_l)-X_C,\\
&&\ \ v_r\in Irr(T_{l-1}\cup F_{4(l)})\cap L(X_l),\ r=1,\dots,n,\
n\in \mathbb{N}\},\\
F_{5(3l)}&=&\{f_{5(3li)}=\underbrace{(((b_0\prec c_0)\prec(b_0\prec
c_0))\prec\cdots\prec(b_0\prec c_0) )}_i\\
&&\ \ \succ (((((b_0\prec c_0)\prec u)\prec v_1)\prec\cdots)\prec
v_n ) -(((d_i\prec u)\prec v_1)\prec\cdots)\prec v_n\ |\\
&&\ \ u,v_r\in Irr(T_{l-1}\cup F_{4(l)})\cap L(X_l),\ r=1,\dots,n,\
n\in \mathbb{N}\}.
 \end{eqnarray*}

Let $F_{4}=\cup_{l\geqslant 1} F_{4(l)}$,
$F_5=\cup_{i=a,b,c,3}(\cup_{l\geqslant 1} F_{5(il)})$. Now let
$D=L(X|S)$ where $S=F_{1-3}\cup F_{4}\cup F_{5}$.

By a similar proof in Theorem \ref{em4}, $S$ is a
Gr\"{o}bner-Shirshov basis in $L(X)$ with the ordering (\ref{1}),
where $X$ is a well-ordered set.  By Theorem \ref{cd}, $A$, $B$ and
$C$ can be embedded into $D=L(X|S)$. By $F_{3}$, $D$ is generated by
$B$ and $C$. By $F_{4}$, $D$ is simple. Now, we finish the proof in
the case of finite-dimensional $L$-algebras $B$ and $C$.

Consider secondly the case that one of $B$ and $C$ is infinite
dimensional. Assume that $\dim B\leq\dim C$, $\dim
C=\alpha\geq\aleph_0$. Since $|k|\leq \dim(B*C)$, so
$\dim(B*C)=|B*C|=\alpha$. Without loss of generality, we can assume
that $\dim A=\alpha$ and so $|A|=\alpha$.

In this case, we only need to change $F_{3}$ into
 \begin{eqnarray*}
F_3&=&\{f_{3(i\beta)}=\underbrace{(((b_0\prec c_0)\prec(b_0\prec
c_0))\prec\cdots\prec(b_0\prec c_0) )}_i\succ(b_0\prec
c_\beta)-d_{i\beta}\ |\\
&&\ \ i\geq 1,\ 1\leq \beta<\alpha\},
 \end{eqnarray*}
where  $X_A\cup\{x_m^{(n)},y_m^{(n)}|1\leq m<\alpha,\ n\geq 1
\}=\{d_{i\beta}|i\geq 1,\ 1\leq \beta<\alpha\}$ with cardinal number
$\alpha$, and $\{c_{\beta}|1\leq \beta<\alpha\}\subseteq X_C$ such
that there is a one-to-one correspondence $\{(i,c_{\beta})|i\in
\mathbb{N}^+,\ 1\leq \beta<\alpha\}\leftrightarrow \{d_{i\beta}|i\in
\mathbb{N}^+,\ 1\leq \beta<\alpha\}$.

Now, in order to keep $S$ as a Gr\"{o}bner-Shirshov basis, we change
$F_{5(3l)}$, $l\in \mathbb{N}^+$ into
 \begin{eqnarray*}
F_{5(3l)}&=&\{f_{5(3li\beta)}=\underbrace{(((b_0\prec
c_0)\prec(b_0\prec
c_0))\prec\cdots\prec(b_0\prec c_0) )}_i\\
&&\ \ \succ (((((b_0\prec c_\beta)\prec u)\prec
v_1)\prec\cdots)\prec
v_n ) -(((d_{i\beta}\prec u)\prec v_1)\prec\cdots)\prec v_n\ |\\
&&\ \ u,v_r\in Irr(T_{l-1}\cup F_{4(l)})\cap L(X_l),\ r=1,\dots,n,\
n\in \mathbb{N},\ i\geq 1,\ 1\leq \beta<\alpha\}.
 \end{eqnarray*}

Then by the same analysis,  $A$, $B$ and $C$ can be embedded into
$D=L(X|S)$ which is a simple $L$-algebra generated by $B$ and $C$.

The proof is completed.
 \ \hfill \end{proof}

\end{document}